\newtheorem{theorem}{Theorem}
\newtheorem{proposition}{Proposition}
\newtheorem{lemma}{Lemma}
\title{A note on the Liouville function in short intervals}
\author{Kaisa Matom\"aki}
\address{Department of Mathematics and Statistics, University of Turku,
20014 Turku, Finland}
\email{ksmato@utu.fi}
\author{Maksym Radziwi\l\l}
\address{Department of Mathematics, Rutgers University \\ Hill Center for the Mathematical Sciences \\
110 Frelinghuysen Rd., Piscataway, NJ 08854-8019 }
\email{maksym.radziwill@gmail.com}
\begin{document}
\maketitle

%{\tt In the Appendix we could just give the proof of the result
%for general multiplicative functions $f$ and $h = X^{\varepsilon}$
%and skip all the statement of lemmas, instead just quoting directly
%which lemma in our paper we use \\ Perhaps mention in our main
%paper that with Tao now have result for Chowla on average?}.

\begin{abstract}
In this note we give a short and self-contained proof that, for any $\delta > 0$, $\sum_{x \leq n \leq x+x^\delta} \lambda(n) = o(x^\delta)$ for almost all $x \in [X, 2X]$. We also sketch a proof of a generalization of such a result to general real-valued multiplicative functions. Both results are special cases of results in our more involved and lengthy recent pre-print.
\end{abstract}

\section{Introduction}

In our recent pre-print~\cite{MainPaper} we have proved (among other things) the following
theorem.
\begin{theorem} \label{mainthm}
Let $f: \mathbb{N} \rightarrow [-1,1]$ be a multiplicative function, and let $h = h(X) \rightarrow \infty$, arbitrarily slowly with $X \rightarrow
\infty$. Then, for almost all $X \leq x \leq 2X$, 
$$
\frac{1}{h}\sum_{x \leq n \leq x + h} f(n) = \frac{1}{X} \sum_{X \leq n \leq 2X} f(n) + o(1)
$$ 
with $o(1)$ not depending on $f$.
\end{theorem}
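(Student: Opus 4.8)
The plan is to prove the stronger mean-square estimate
$$
\frac{1}{X}\int_X^{2X}\Bigl|\frac1h\sum_{x\le n\le x+h}f(n)-\frac1X\sum_{X\le m\le 2X}f(m)\Bigr|^2\,dx = o(1),
$$
uniformly over all multiplicative $f\colon\mathbb{N}\to[-1,1]$; Chebyshev's inequality then yields the theorem, the $o(1)$ in the conclusion inheriting the uniformity in $f$. First I would convert the inner sum into a contour integral by Perron's formula, expand the square, integrate in $x$, and apply Plancherel after the substitution $x=Xe^u$. Writing $F(1+it)=\sum_{X<n\le 2X}f(n)n^{-1-it}$, this bounds the left-hand side by a main term coming from $|t|\le T_0$, with $T_0$ a small power of $\log X$, which reproduces $\frac1X\sum_m f(m)$ up to an error controlled separately via Hal\'asz's theorem — the latter shows $F(1+it)$ cannot vary much on the scale $1/\log X$ unless it is already small — plus a term of the shape
$$
(\text{normalisation})\cdot\int_{T_0\le|t|\le X/h}|F(1+it)|^2\,dt .
$$
So everything comes down to showing that this last integral saves a little over the bound $O(\log X)$ furnished by the mean value theorem for Dirichlet polynomials, and the saving must come from multiplicativity.

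To exploit multiplicativity I would use a Ramar\'e-type identity. All but $O(X/(\log X)^{10})$ of the integers $n\in(X,2X]$ have a prime factor in a prescribed interval $I=[P_1,P_2]$, and by Tur\'an--Kubilius the number of such prime factors is, for almost all $n$, close to its mean $L:=\sum_{p\in I}1/p$; expanding $f(n)=f(p)f(n/p)$ at an exactly-dividing prime $p\in I$ then gives, up to acceptable errors,
$$
F(1+it)\approx\frac1L\sum_{P_1\le p\le P_2}\frac{f(p)}{p^{1+it}}\sum_{m}\frac{a_m}{m^{1+it}},
$$
with $|a_m|\le 1$ and $m$ running over an interval near $X/p$. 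One then estimates the two factors separately, dyadically in $|t|\in[T,2T]$: the smooth factor contributes $\int_T^{2T}|\sum_m a_m m^{-1-it}|^2\,dt\ll 1$ by the mean value theorem, while the prime factor is handled by the prime number theorem (zero-free region) when $|t|\le(\log X)^{A}$, where it is $\ll(\log X)^{-A}$ pointwise, and by large-values estimates for larger $|t|$, where one shows it exceeds $V$ only on a $t$-set of measure $\ll TV^{-2}(\log X)^{O(1)}$. Summing the dyadic blocks — with the freedom to choose, or average over, the interval $I$ (taken short and low-lying when $h$ grows slowly) to extract the final saving — produces the desired $o(\log X)$.

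The main obstacle is the range of moderately large $t$, say $(\log X)^{A}\le|t|\le X/h$: the prime number theorem no longer forces pointwise cancellation in the prime sum, yet the mean value theorem alone merely recovers the trivial bound, so in this range the gain has to come entirely from combining the factorisation with large-values information for \emph{both} factors and with the averaging over the choice of prime interval. This is precisely the step where the argument of \cite{MainPaper} is most delicate, especially because $h$ is allowed to tend to infinity arbitrarily slowly, which forces $P_2$ to be small and hence makes all of these estimates tight; everything else reduces to fairly standard Dirichlet-polynomial technology. Finally, uniformity in $f$ is automatic throughout, since Perron--Plancherel, the mean value theorem, Hal\'asz's theorem, large-values bounds and the prime number theorem are all insensitive to the specific bounded coefficients, so the resulting $o(1)$ depends only on the function $h=h(X)$.
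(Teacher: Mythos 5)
Your architecture (Chebyshev reduction to a mean square, Perron/Plancherel reduction to $\int|F(1+it)|^2\,dt$, Ramar\'e's identity to factor the polynomial, Hal\'asz plus large-values input) is the right skeleton, but the proposal has two genuine gaps. First, a concrete error: for a general multiplicative $f\colon\mathbb{N}\to[-1,1]$ the prime factor $\sum_{p\in I}f(p)p^{-1-it}$ is \emph{not} $\ll(\log X)^{-A}$ for $|t|\le(\log X)^A$ by the prime number theorem --- the values $f(p)$ are arbitrary (take $f(p)=1$), so there is no cancellation to extract from the primes; the zero-free-region argument (Lemma~\ref{lemma2}) applies only to the unsigned sum $\sum_p p^{-1-it}$, hence to $\lambda$. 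In the general case the pointwise saving in the range $(\log X)^{c}\le|t|\le T$ must come from the \emph{long} factor $\sum_m a_m m^{-1-it}$ via Hal\'asz's theorem (after the main term is removed, $F$ can only be large for $t$ very close to $0$), while the prime polynomial is split into the set where it is small (trivial) and the set where it is large, the latter controlled by a large-values estimate; your sketch inverts which factor carries the pointwise decay.

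Second, and more seriously, the step you flag as "precisely where the argument of \cite{MainPaper} is most delicate" is exactly the content of the theorem, and you do not supply it. With $h\to\infty$ arbitrarily slowly the prime interval $I=[P_1,P_2]$ must sit very low (else the mean value theorem for the long factor costs a factor $P_2T/X$ that is fatal for $T$ near $X/h$), and then a single application of Ramar\'e's identity is not enough: one needs (i) a Hal\'asz--Montgomery-type large-values bound for Dirichlet polynomials supported on \emph{primes}, giving far fewer large values than the classical $L^2$ bound, and (ii) an iterative decomposition over a whole increasing chain of prime intervals $[P_1,Q_1],[P_2,Q_2],\dots$, passing to the next scale on the exceptional set where the current prime polynomial is large. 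Without these two mechanisms the moderately-large-$t$ range only recovers the trivial bound. This is why the present note restricts to $h=X^\delta$: there one may take $Q=X^{\delta/3}$, a single interval suffices, and (for $\lambda$) the prime sum really is handled by the zero-free region. As written, your proposal proves (modulo the first error) roughly Theorem~\ref{thm3}, not Theorem~\ref{mainthm}.
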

In particular for the Liouville function this result implies that, 
for any $\psi(X) \to \infty$ with $X \to \infty$, we have
\begin{equation} \label{almostall}
\sum_{x \leq n \leq x + \psi(X)} \lambda(n) = o(\psi(X))
\end{equation}
for almost all $X \leq x \leq 2X$. Previously this was known unconditionally only when $\psi(X) \geq X^{1/6}$ (using zero-density theorems), and under the density hypothesis for $\psi(X) \geq X^\delta$ for any $\delta > 0$.

The proof of Theorem \ref{mainthm} is complicated for two reasons.
First of all, in order to achieve the result for a specific function
such as $\lambda(n)$ with $h$ growing arbitrarily
slowly we need to perform a messy decomposition of the Dirichlet polynomial
$$
\sum_{n \sim X} \frac{\lambda(n)}{n^{1 + it}}
$$
according to the size of $\sum_{P < p < Q} \lambda(p) p^{-1 -it}$ for suitable intervals
$[P,Q]$. Secondly, to obtain the result for arbitrary $f$, we need
to input some additional ideas dealing with large values of Dirichlet
polynomials. We realized recently that in the special case of
the Liouville function and intervals of length $X^{\delta}$ neither
is necessary.

Our goal in this short note is to give a short and self-contained
proof of the following special case of Theorem~\ref{mainthm}. 
\begin{theorem} \label{thm2}
Let $\delta > 0$ be given. Then, for almost all $X \leq x \leq 2X$, we have
$$
\sum_{x \leq n \leq x + X^{\delta}} \lambda(n) = o(X^{\delta}).
$$
\end{theorem}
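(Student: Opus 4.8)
The plan is to use the second-moment (Parseval) method. Write $H=X^{\delta}$ and $F(s):=\sum_{X\le n\le 2X}\lambda(n)n^{-s}$. \textbf{Step 1 (reduction to a Dirichlet polynomial).} Expressing $\sum_{x\le n\le x+H}\lambda(n)$ by a truncated Perron formula in terms of $F$ and applying Plancherel's theorem in the $x$-variable, one gets an inequality of the form
\[
\frac1X\int_X^{2X}\Bigl|\sum_{x\le n\le x+H}\lambda(n)\Bigr|^2\,dx \;\ll\; H^2\int_{|t|\le X/H}|F(1+it)|^2\,dt \;+\; X^2\int_{|t|>X/H}\frac{|F(1+it)|^2}{t^2}\,dt .
\]
By Chebyshev's inequality it is then enough to show that the right-hand side is $o(H^2)$, i.e. that $\int_{|t|\le X/H}|F(1+it)|^2\,dt=o(1)$ and $\int_{|t|>X/H}|F(1+it)|^2 t^{-2}\,dt=o(H^2/X^2)$. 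The Montgomery--Vaughan mean value theorem $\int_0^T|F(1+it)|^2\,dt\ll T/X+1$ only delivers $O(\log X)$ and $O(H^2/X^2)$ for these quantities, so what is really needed is a genuine improvement: a factor $o(1/\log X)$ on average over dyadic blocks in the first case and $o(1)$ in the second.

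\textbf{Step 2 (small $t$).} For $|t|\le T_0:=\exp\bigl((\log X)^{1/3}\bigr)$ the classical zero-free region for $\zeta$, equivalently the prime number theorem with classical error term, yields $F(1+it)\ll\exp(-c\sqrt{\log X})$, so $\int_{|t|\le T_0}|F(1+it)|^2\,dt\ll T_0\exp(-2c\sqrt{\log X})=o(1)$. Some input of this kind is in any case unavoidable, since already the ``main term'' $\frac1X\sum_{X\le n\le 2X}\lambda(n)$ is $o(1)$ only by the prime number theorem.

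\textbf{Step 3 (the range $T_0\le |t|\le X$).} This is the crux, and here one must exploit the multiplicativity of $\lambda$. I would use Ramar\'e's identity: for a dyadic prime interval $[P,2P]$, with $\omega(n)$ denoting the number of prime factors of $n$ in $[P,2P]$,
\[
\mathbf{1}_{\omega(n)\ge 1}\;=\;\sum_{\substack{p\mid n\\ P\le p\le 2P}}\frac{1}{\omega(n)} ,
\]
whence, using $\lambda(pm)=-\lambda(m)$,
\[
F(1+it)\;=\;-\sum_{P\le p\le 2P}\frac1{p^{1+it}}\sum_{\substack{m\\ pm\sim X}}\frac{\lambda(m)}{m^{1+it}\,\omega(pm)}\;+\;\sum_{\substack{n\sim X\\ \omega(n)=0}}\frac{\lambda(n)}{n^{1+it}} .
\]
The second sum is supported on integers with no prime factor in $[P,2P]$; by running the argument over a family of disjoint intervals $[P_j,2P_j]$ one can force the analogous exceptional integers to form a set of density $o(1)$, so that the Montgomery--Vaughan theorem controls the mean square of this ``sparse'' remainder. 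For the first sum one iterates Ramar\'e's identity, peeling off prime factors in the intervals $[P_j,2P_j]$ one at a time; this rewrites it, up to controlled error terms, through powers of the short prime polynomials $D_j(1+it):=\sum_{P_j\le p\le 2P_j}p^{-1-it}$ multiplied by shorter Dirichlet polynomials of $\lambda$ over integers with no small prime factors --- heuristically $F(1+it)$ behaves like $\bigl(\prod_j e^{-D_j(1+it)}\bigr)$ times a sparse remainder. One then bounds the resulting mean square via the mean value theorem, using that $D_j(1+it)$ is small for most $t$ and organising the estimate multiplicatively (and, if need be, by an induction on the length of the Dirichlet polynomial), rather than by a crude triangle inequality over the many pieces, which would lose far too much.

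\textbf{The main obstacle} is entirely in Step 3: one must turn the typical smallness of the short prime polynomials into genuine cancellation for $F(1+it)$ while controlling, through the iteration, the interplay between the number of extracted primes, the lengths of the surviving polynomials, and the sizes of the $D_j(1+it)$. This is the heart of the method behind Theorem~\ref{mainthm}. The reason the present case is lighter is that for $\lambda(n)$ with $H=X^{\delta}$ one may take all parameters to be (small) powers of $X$, and one avoids both the decomposition of $F$ according to the size of $\sum_{P<p<Q}\lambda(p)p^{-1-it}$ and the large-values estimates for Dirichlet polynomials that the general statement requires.
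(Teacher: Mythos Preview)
Your Steps 1 and 2 match the paper's framework (its Lemma~3 and Lemma~1). The divergence is in Step~3, where you propose to iterate Ramar\'e's identity over a family of dyadic prime intervals $[P_j,2P_j]$, extract products of short prime polynomials $D_j$, and then ``organise the estimate multiplicatively'' using that the $D_j(1+it)$ are small ``for most $t$''. This is essentially the machinery behind the full Theorem~\ref{mainthm}, and you yourself flag that the main obstacle lies here and leave it as a heuristic plan. As written, it is not a proof: knowing the $D_j$ are small only for most $t$ forces some large-values or exceptional-set argument, which you have not supplied.

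The paper's point is precisely that for $\lambda$ and $H=X^\delta$ none of this is needed. A \emph{single} application of Ramar\'e's identity with one wide interval $[P,Q]=[\exp((\log X)^{2/3+\varepsilon}),\,X^{\delta/3}]$ suffices. The sparse remainder (integers with no prime factor in $[P,Q]$) already has density $\ll\log P/\log Q\ll(\log X)^{-1/3+\varepsilon}$ by the sieve, so one mean-value theorem disposes of it. For the bilinear piece, the key fact you are missing is that the prime polynomial $\sum_{P\le p\le Q}p^{-1-it}$ is small \emph{pointwise} for every $(\log X)^{10}\le|t|\le X$, not merely for most $t$: this follows from Vinogradov's zero-free region because $P\ge\exp((\log X)^{2/3+\varepsilon})$. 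One then pulls out this pointwise bound and applies the mean-value theorem to the remaining factor; the choice $Q\le X^{\delta/3}$ ensures the shortened polynomial is still long enough relative to $T\le X^{1-\delta}$. No iteration, no multiplicative reorganisation, no handling of exceptional $t$.

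So the gap is that you have reconstructed (and left unresolved) the harder general scheme, while overlooking the two-line simplification --- wide $[P,Q]$ plus Vinogradov's pointwise bound --- that makes this special case genuinely short.
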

We have not tried to optimize any of the bounds for the amount of cancellations
or for the size of the exceptional set. With a bit additional effort this can be
done (but we refer the reader to our paper \cite{MainPaper}). 

For the convenience of the reader we have also indicated in the appendix how to generalize
this result to arbitrary multiplicative $f$. This is more intricate and depends
on a number of lemmas which are proven in our paper \cite{MainPaper}. We will invoke
these lemmas freely throughout the proof of the following theorem.
\begin{theorem} \label{thm3}
Let $f: \mathbb{N} \rightarrow [-1,1]$ be a multiplicative function. Let $\delta > 0$.
Then, for almost all $X \leq x \leq 2X$, we have
$$
\frac{1}{X^\delta} \sum_{x \leq n \leq x + X^\delta} f(n) = \frac{1}{X} \sum_{X \leq n \leq 2X} f(n) + o(1)
$$
with $o(1)$ not depending on $f$. 
\end{theorem}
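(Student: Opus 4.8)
The plan is to adapt the argument for Theorem~\ref{thm2}, replacing the input from the prime number theorem (which is special to $\lambda$) by Ramar\'e's identity together with the decomposition of \cite{MainPaper}, so as to handle all $f\colon\mathbb N\to[-1,1]$ uniformly. Write $h=X^{\delta}$ and $\mathcal F(s)=\sum_{n\sim X}f(n)n^{-s}$. The first step is the Parseval-type identity of \cite{MainPaper}, which bounds
\[
\frac1X\int_X^{2X}\Bigl|\frac1h\sum_{x\le n\le x+h}f(n)-\frac1X\sum_{n\sim X}f(n)\Bigr|^{2}\,dx
\]
by (essentially) a mean square of $\mathcal F(1+it)$ over an intermediate range of $t$, up to a term that is $o(1)$ and accounts for the low frequencies (which reproduce the mean term) and the very high ones (which are negligible). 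So it suffices to bound this mean square by $o(1)$, uniformly in $f$. I expect this reduction — and pinning down exactly which Dirichlet-polynomial quantity has to be estimated — to be the main obstacle; after that the steps are soft and are all available in \cite{MainPaper}.

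Next I would use multiplicativity only through Ramar\'e's identity. Fix $\varepsilon=\varepsilon(X)\to0$ slowly and choose disjoint intervals $I_1,\dots,I_K$ of the form $I_j=[P_j,P_j^{1+\varepsilon}]$ with $P_j\to\infty$, $P_j^{1+\varepsilon}\le X^{\delta/2}$, and $K\asymp\varepsilon^{-1}\log(1/\varepsilon)$ large enough that all but an $o(1)$-proportion of $n\sim X$ has a prime factor in $\bigcup_j I_j$. After disjointifying (assigning each such $n$ to the first $I_j$ meeting its prime factorisation), Ramar\'e's identity lets one write, on the $n$ assigned to $I_j$, $f(n)=\omega_{I_j}(n)^{-1}\sum_{p\mid n,\,p\in I_j}f(p)f(n/p)$ up to negligible (squarefull) terms, and hence factor the corresponding piece of $\mathcal F$ as
\[
\mathcal F_j(1+it)=\sum_{p\in I_j}\frac{f(p)}{p^{1+it}}\,\mathcal D_{j,p}(1+it)+(\text{error}),\qquad
\mathcal D_{j,p}(1+it)=\sum_{\substack{m\sim X/p\\ p\nmid m}}\frac{f(m)}{m^{1+it}\,(\omega_{I_j}(m)+1)},
\]
with $|\mathcal D_{j,p}(1+it)|\le\sum_{m\sim X/p}1/m\ll1$. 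Carrying out this decomposition, controlling the overlaps and the $o(1)$-density set of $n$ with no prime factor in $\bigcup_j I_j$, is precisely the content of the relevant lemmas of \cite{MainPaper}.

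Finally, by Cauchy--Schwarz and $\sum_{p\in I_j}1/p=\log(1+\varepsilon)+o(1)\le\varepsilon$ (Mertens), $|\mathcal F_j(1+it)|^{2}\le\varepsilon\sum_{p\in I_j}p^{-1}|\mathcal D_{j,p}(1+it)|^{2}+(\text{error})$, while the mean value theorem for Dirichlet polynomials gives, for each $j$ and each $p\le X^{\delta/2}$,
\[
\int_{|t|\le X/h}|\mathcal D_{j,p}(1+it)|^{2}\,dt\ll\Bigl(\frac Xh+\frac Xp\Bigr)\sum_{m\sim X/p}\frac1{m^{2}}\ll\frac ph+1\ll1,
\]
the last step using $p/h\le X^{-\delta/2}$ — this is where the interval length being a fixed power $X^{\delta}$ matters, and why, in contrast with the slowly-growing-$h$ case, no large-values input is needed. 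Hence $\int_{|t|\le X/h}|\mathcal F_j(1+it)|^{2}\,dt\ll\varepsilon^{2}$, summing to $K\varepsilon^{2}\ll\varepsilon\log(1/\varepsilon)=o(1)$; together with the $o(1)$ contributions of the exceptional $n$ and of the main-term and high-frequency ranges, this feeds back through the Parseval reduction to give the theorem, with the $o(1)$ independent of $f$. (Real-valuedness of $f$ enters only at the largest scales, where one uses Hal\'asz-type information from \cite{MainPaper} to match the mean term; for complex $f$ the statement genuinely fails, e.g.\ for $f(n)=n^{it_0}$ with $t_0\neq0$.)
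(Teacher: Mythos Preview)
There is a genuine gap in the final step. After decomposing $\mathcal F=\sum_{j=1}^{K}\mathcal F_j+(\text{error})$ and establishing $\int_{|t|\le X/h}|\mathcal F_j(1+it)|^2\,dt\ll\varepsilon^2$ for each $j$, you write ``summing to $K\varepsilon^2\ll\varepsilon\log(1/\varepsilon)=o(1)$''. But the Parseval reduction requires a bound on $\int\bigl|\sum_j\mathcal F_j\bigr|^2\,dt$, not on $\sum_j\int|\mathcal F_j|^2\,dt$. Disjointness of the supports $S_j\subset\{n\sim X\}$ gives no orthogonality for the Dirichlet polynomials over $|t|\le X/h$; the only general passage is Cauchy--Schwarz over $j$, which costs an extra factor $K$, and $K^2\varepsilon^2\asymp(\log(1/\varepsilon))^2\to\infty$. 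The same obstruction appears if you apply Cauchy--Schwarz directly to the full prime sum $\sum_{p\in\bigcup_j I_j}f(p)p^{-1-it}\mathcal D_p$: the first factor is $\sum_p 1/p\asymp K\varepsilon$, and you again get $(K\varepsilon)^2$. The underlying tension is irreducible: the Cauchy--Schwarz over primes wins only $\sum_{p\in I}1/p$, while making the sieve error $o(1)$ forces $\sum_{p\in\bigcup_j I_j}1/p\to\infty$, so the two cancel.

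This is exactly why, for general real-valued $f$, the paper does \emph{not} avoid large-value input, contrary to your claim. The proof of Proposition~\ref{proposition2} uses a single Ramar\'e interval $[P,Q]$ with $Q=X^{o(1)}$, splits the $t$-range into a set $\mathcal T_S$ where the prime polynomial $Q_{j,H}$ is tiny (handled as you suggest, by the mean value theorem on the long polynomial) and a set $\mathcal T_L$ where it is not. On $\mathcal T_L$ one needs two further ingredients from \cite{MainPaper}: a large-value lemma to show $|\mathcal T_L|$ is small, and a Hal\'asz-type pointwise bound $|F_{j,H}(1+it)|\ll(\log X)^{-c}$ valid for $|t|\ge(\log X)^{1/15}$. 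So your assertion that ``real-valuedness of $f$ enters only at the largest scales, to match the mean term'' undersells its role: Hal\'asz is what supplies the saving on $\mathcal T_L$, and the large-value estimate is what keeps $\mathcal T_L$ under control. Without these, your Cauchy--Schwarz--plus--mean--value argument cannot close.
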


It is worthwhile to point out that essentially the only non-standard idea from~\cite{MainPaper} that is needed in the proof of Theorem \ref{thm2} is the use of Ramar\'e type identity (see~\eqref{eq:Ramare} below). 
%Theorem \ref{thm3}
%depends roughly on the ideas which enter with dealing with the set $\mathcal{U}$, 
%in the notation of \cite{MainPaper}. 
%If one wanted to push Theorem \ref{thm2} to hold true in shorter intervals, one would
%need to deal with the sets $\mathcal{T}_j$ as in our paper. If one also wanted
%this to hold for arbitrary $f$, then, in addition, it would be necessary to deal
%with the set $\mathcal{U}$, and this would recover Theorem \ref{mainthm}. 

%In order to push both results towards smaller
%intervals we would need to perform the inductive decomposition into sets 
%$\mathcal{T}_j$ as in our paper. If one just wished to obtain Theorem 1 in 
%smaller intervals it would be enough to deal with sets $\mathcal{T}_j$. 

\section{The main propositions}

Theorem \ref{thm2} follows immediately from the following proposition.
\begin{proposition} \label{proposition1}
Let $\delta > 0$ be given. Then, for any $\varepsilon > 0$,
$$
\int_{X}^{2X} \Big | \frac{1}{X^{\delta}} \sum_{x \leq n \leq x + X^{\delta}} \lambda(n) \Big |^2 dx \ll_{\varepsilon} \frac{X}{(\log X)^{1/3 - \varepsilon}}. 
$$
\end{proposition}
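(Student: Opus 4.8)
The plan is to reduce the mean-square of the short-interval average of $\lambda$ to a mean-square estimate for a Dirichlet polynomial, and then to estimate that Dirichlet polynomial using a combination of the mean-value theorem for Dirichlet polynomials and pointwise bounds coming from a Ramar\'e-type factorization. First I would apply Parseval/Plancherel: writing the short sum as a smooth-ish convolution and using a standard identity (Perron's formula or the Fourier-analytic identity relating $\frac1{X^\delta}\sum_{x \le n \le x+X^\delta}\lambda(n)$ to an integral of $\sum_{n \sim X} \lambda(n) n^{-1-it}$ against a kernel concentrated on $|t| \lesssim X^{1-\delta}$), I would bound
\begin{equation*}
\int_X^{2X} \Big| \frac1{X^\delta} \sum_{x \le n \le x+X^\delta} \lambda(n) \Big|^2 dx \ll X \int_{1}^{X^{1-\delta}} \Big| \frac1{X}\sum_{n \sim X} \frac{\lambda(n)}{n^{1+it}} \Big|^2 \, \frac{dt}{?} + (\text{small}),
\end{equation*}
so that the whole problem becomes: show that $F(t) := \sum_{n \sim X} \lambda(n) n^{-it}$ is small in mean square over a range of $t$ up to roughly $X^{1-\delta}$, with a saving of a power of $\log X$.

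The key tool for making $F(t)$ small is the Ramar\'e-type identity: almost every integer $n \sim X$ has a prime factor in a suitable range $[P, Q]$ with $P = (\log X)^{A}$ and $Q$ a small power of $X$ (or $P,Q$ fixed powers of $\log$), and one can write
\begin{equation} \label{eq:Ramare}
\lambda(n) = -\sum_{\substack{p \mid n \\ P \le p \le Q}} \frac{\lambda(n/p)}{\#\{ q \mid n : P \le q \le Q\}} \cdot \frac{1}{?} \quad \text{(schematically)},
\end{equation}
which after summing gives a factorization $F(t) \approx \big(\sum_{P \le p \le Q} \lambda(p) p^{-it}\big)\big(\sum_{m} a_m m^{-it}\big)$ up to a manageable error from integers with no prime factor in $[P,Q]$ (handled by a sieve/Brun--Titchmarsh bound, costing a factor like $1/\log$). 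Then I would split the $t$-range into the ``minor arc'' where the prime polynomial $P(t) = \sum_{P \le p \le Q} \lambda(p) p^{-it}$ is small, and the ``major arc'' where it is large. On the set where $|P(t)|$ is small one wins pointwise. On the complementary set, which by a mean-value/large-values argument (the classical $\int |P(t)|^2 \ll (\text{length})(Q/\log Q)$ plus Chebyshev on the measure of large values) has small measure, one uses the mean-value theorem for Dirichlet polynomials applied to the long factor $\sum_m a_m m^{-it}$ to control the contribution; balancing $P,Q$ against $\delta$ and the exponent $1/3$ yields the stated bound $X/(\log X)^{1/3-\varepsilon}$.

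The main obstacle I expect is the bookkeeping around the Ramar\'e identity and the large-values step: one must choose $P$ and $Q$ (both slowly growing, essentially powers of $\log X$) so that simultaneously (i) the set of $n \sim X$ with no prime factor in $[P,Q]$ is negligible, (ii) the ``off-diagonal'' or divisor-counting losses from the normalization $\#\{q \mid n : P \le q \le Q\}$ are at most a small power of $\log X$, and (iii) the resulting mean-square bound on $F$ beats $1/(\log X)^{1/3}$ uniformly for $t$ up to $X^{1-\delta}$. Concretely, the Halász/mean-value input gives that $\frac1T \int_0^T |P(t)|^2 dt$ is roughly $\sum_{P \le p \le Q} 1/p \approx \log\log Q - \log\log P$, which is only $O(1)$ when $P, Q$ are fixed powers of $\log$; to get a genuine power saving one instead exploits that for the relevant $t$ the Euler product $\prod_p (1 - \lambda(p) p^{-it})^{-1}$ type heuristics force cancellation, and the $1/3$ exponent comes out of optimizing the trade-off between the measure of the large-values set and the mean-value bound for the companion polynomial. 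Secondarily, justifying the passage from the short sum to the Dirichlet polynomial integral with an honest error term (rather than schematically) requires a careful but standard Perron/Fourier argument, which I would do with an explicit smooth majorant for the indicator of $[x, x+X^\delta]$.
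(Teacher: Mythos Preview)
Your overall architecture (Parseval reduction to a Dirichlet polynomial, then Ramar\'e factorization) matches the paper, but the key analytic step is misidentified, and as written the argument would not close.

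The essential point you are missing is the choice of $P$ and the mechanism that makes the prime polynomial small. In the paper one takes
\[
P=\exp\big((\log X)^{2/3+\varepsilon}\big),\qquad Q=X^{\delta/3},
\]
\emph{not} $P$ a power of $\log X$. With this $P$ the Vinogradov--Korobov zero-free region (Lemma~2) gives a \emph{pointwise} bound $|\sum_{e^{j/H}\le p\le e^{(j+1)/H}} p^{-1-it}|\ll(\log X)^{-A}$ uniformly for $(\log X)^{10}\le |t|\le X$; no large-values dichotomy is needed at all. One simply inserts this pointwise bound and applies the mean value theorem to the companion polynomial $F_{j,H}$. The exponent $1/3$ then comes from a completely different place than you suggest: it is the sieve bound for the Ramar\'e error term,
\[
\frac{1}{X}\#\{n\sim X:\ p\mid n\Rightarrow p\notin[P,Q]\}\ \ll\ \frac{\log P}{\log Q}\ \asymp\ \frac{(\log X)^{2/3+\varepsilon}}{\log X}\ =\ (\log X)^{-1/3+\varepsilon}.
\]
There is no optimization or trade-off with a large-values set.

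Your proposed large-values route has a genuine gap. On the set where the prime polynomial is large, you plan to ``use the mean-value theorem for Dirichlet polynomials applied to the long factor''. But the mean value theorem gives $\int_{-T}^{T}|F_{j,H}|^2\ll (T+X/e^{j/H})\cdot e^{j/H}/X$, and restricting the integral to a subset of small measure does not improve this bound; you would need a \emph{pointwise} estimate for $F_{j,H}$ on that set (e.g.\ a Hal\'asz-type input, as in the paper's Appendix for general $f$), which you have not invoked. Moreover, if $P$ is only a power of $\log X$, Lemma~2 is unavailable and the prime polynomial need not be small on any substantial set of $t$; the ``minor arc'' could be empty. So with your parameters neither branch of the dichotomy produces the required saving.
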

\begin{proof}[Deduction of Theorem \ref{thm2} from Proposition \ref{proposition1}]
By Chebyschev's inequality the number of exceptional $x \in [X, 2X]$ for which
$$
\Big | \frac{1}{X^{\delta}} \sum_{x \leq n \leq x + X^{\delta}} \lambda(n) \Big | \geq \frac{1}{(\log X)^{1/9}}. 
$$
is at most 
$$
(\log X)^{2/9} \int_{X}^{2X} \Big | \frac{1}{X^{\delta}} \sum_{x \leq n \leq x + X^{\delta}} \lambda(n) \Big |^2 dx 
\ll_{\varepsilon} \frac{X}{(\log X)^{1/9 - \varepsilon}} = o(X)
$$
as claimed. 
\end{proof}

In order to prove Theorem \ref{thm3} we will sketch the proof of the following proposition in the Appendix.
\begin{proposition}
\label{proposition2}
Let $f: \mathbb{N} \rightarrow [-1,1]$ be a multiplicative function.
Let $\delta > 0$ be given. Then
$$
\int_{X}^{2X} \Big | \frac{1}{X^{\delta}} \sum_{x \leq n \leq x + X^{\delta}} f(n) - \frac{1}{X} \sum_{X \leq n \leq 2X}
f(n) \Big |^2 dx \ll \frac{X}{(\log X)^{1/48}}.
$$
\end{proposition}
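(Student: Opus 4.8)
The strategy follows the same architecture as the proof of Proposition~\ref{proposition1}, but we must replace every $\lambda$-specific input by a statement that is uniform over bounded multiplicative $f$. First I would normalize and reduce to dyadic ranges: by a standard Fourier/Perron-type manipulation (expanding the short sum via the Fej\'er kernel as in the deduction of Proposition~\ref{proposition1}), the mean square on the left is controlled by an integral of the form
$$
\int_{1}^{X^{1-\delta}} \Big| \sum_{n \sim X} \frac{f(n)}{n^{1+it}} \Big|^2 \, dt \;+\; (\text{negligible terms}),
$$
so that it suffices to show the Dirichlet polynomial $F(1+it) = \sum_{n\sim X} f(n) n^{-1-it}$ is small in $L^2$ on $[1, X^{1-\delta}]$ after removing the contribution of $t$ near $0$ (which produces the main term $\tfrac1X\sum_{X\le n\le 2X} f(n)$).

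Next comes the key structural step, which is the only genuinely new ingredient: the Ramar\'e-type identity~\eqref{eq:Ramare}. Writing each $n\sim X$ in terms of a prime factor $p$ in a well-chosen range $[P,Q]$ with $P,Q$ tending to infinity slowly (for instance $P = (\log X)^{A}$, $Q = \exp((\log X)^{1/2})$ or similar), one factors $F(1+it)$, up to an acceptable error from integers with no prime factor in $[P,Q]$, as a product
$$
F(1+it) \approx \Big( \sum_{P \le p \le Q} \frac{f(p)}{p^{1+it}} \Big) \cdot \Big( \sum_{m} \frac{a(m)}{m^{1+it}} \Big),
$$
where the second factor is again a Dirichlet polynomial with bounded coefficients and the first factor is a short prime polynomial. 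I would then split $[1,X^{1-\delta}]$ into the set where $|\sum_{P\le p\le Q} f(p)p^{-1-it}|$ is small and its complement. On the former set, the factorization immediately gives the desired smallness (using a trivial $L^2$ bound on the long smooth factor, which costs only $\log$-powers). On the latter — the large-values set — one invokes the mean-value theorem for Dirichlet polynomials together with a bound on the measure of the set of $t$ on which a prime polynomial is large; the crucial point is that the number of such $t$ is small because $\sum_p p^{-1} \log p$ over a range $[P,Q]$ is only $\log(Q/\log P)$, so large values are rare. This is exactly where the uniformity in $f$ is preserved: the large-values estimate depends only on $|f|\le 1$, not on any arithmetic of $f$.

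For the main term extraction, note that the Ramar\'e identity and the above factorization are valid with the same structure for the long average $\tfrac1X\sum_{X\le n\le 2X} f(n)$ (which corresponds essentially to $t$ in a tiny neighbourhood of $0$ of length $\asymp X^{-\delta}$ after the Perron step). So subtracting the long average corresponds precisely to excising the interval $|t| \le X^{-\delta}$ (or thereabouts) from the $t$-integral, and the estimates above apply verbatim to the remaining range $X^{-\delta} \le |t| \le X^{1-\delta}$. Assembling: the set where the prime polynomial is small contributes $O(X/(\log X)^{c})$ for some $c>0$ coming from the expected cancellation $\sum_{P\le p\le Q} f(p)p^{-1-it}$ being small on that set, and the large-values set contributes a comparable amount because it has small measure. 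Optimizing the choices of $P$, $Q$ and the large-values threshold yields the stated exponent $1/48$ (the loss relative to the exponent $1/3$ in Proposition~\ref{proposition1} reflects the cruder, $f$-uniform treatment of the long smooth factor and of the large-values count).

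\textbf{Main obstacle.} The hardest part is the large-values analysis combined with bounding the ``bad'' set where the short prime polynomial $\sum_{P\le p\le Q} f(p) p^{-1-it}$ is not small: one must show both that this set has small measure and that $F(1+it)$ is not too large on it, and doing this without any arithmetic input on $f$ forces one to rely purely on general mean-value and large-values theorems for Dirichlet polynomials. Controlling the interaction of these two inputs — in particular arranging the ranges $[P,Q]$ and the threshold so that neither the small-polynomial contribution nor the large-values contribution dominates — is the delicate quantitative heart of the argument, and it is why the power of $\log X$ saved is so modest.
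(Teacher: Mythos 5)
Your overall architecture (reduction to a mean square of $F(1+it)=\sum_{n\sim X}f(n)n^{-1-it}$, Ramar\'e factorization into a short prime polynomial times a long polynomial, and a split according to whether the prime polynomial is small or large) is the paper's architecture. But there is a genuine gap in your treatment of the large-values set, and it is precisely the point where the proof is delicate. The set $\mathcal{T}_L$ where $|Q_{j,H}(1+it)|$ exceeds the threshold is \emph{not} small enough for a ``small measure times trivial pointwise bound'' argument to close: the number of well-spaced large values one can prove (via moments of the prime polynomial, \cite[Lemma 8]{MainPaper}) is only $\ll \exp((\log X)^{1/48+o(1)})$, which exceeds every fixed power of $\log X$, while the trivial pointwise bound on the long factor $F_{j,H}(1+it)$ is $O(1)$ (its coefficients sum to $\asymp 1$ over a dyadic block). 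Multiplying these loses hopelessly. The paper closes this with two further inputs you omit: (i) Hal\'asz's theorem for the multiplicative function $f$ (\cite[Lemma 3]{MainPaper}), giving the pointwise bound $\sup_{(\log X)^{1/15}\le |t|\le T}|F_{j,H}(1+it)|\ll(\log X)^{-1/24}$ uniformly in real-valued $f$ — this directly contradicts your assertion that the argument proceeds ``without any arithmetic input on $f$,'' since Hal\'asz crucially uses multiplicativity and real-valuedness; and (ii) a large-values estimate for $\sum_{t\in\mathcal{T}}|Q_{j,H}(1+it)|^2$ over the well-spaced set (\cite[Lemma 11]{MainPaper}), not merely a bound on the measure of $\mathcal{T}_L$. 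Without both, the large-values contribution cannot be bounded by $(\log X)^{-2-1/24-1/48}$ as needed to beat the $(\log X)^{2+1/24}$ loss from the pigeonholing over $j$.

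A secondary issue is the main-term extraction. You propose to identify the long average $\frac1X\sum_{X\le n\le 2X}f(n)$ with the contribution of $|t|\le X^{-\delta}$ and excise that range; this does not work as stated, because for a general bounded multiplicative $f$ the contribution of $X^{-\delta}\le|t|\le(\log X)^{1/15}$ (where neither Lemma~\ref{lemma1}-type PNT input nor Hal\'asz is available uniformly) is not negligible and in fact carries part of the main term. The paper instead compares the $X^{\delta}$-average to an intermediate average over intervals of length $h_2=X/(\log X)^{1/5}$ (which only requires control of $|t|\ge(\log X)^{1/15}$) and then relates the $h_2$-average to the full dyadic average by a separate Lipschitz-type consequence of Hal\'asz's theorem (\cite[Lemma 4]{MainPaper}). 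You would need some substitute for this two-step comparison.
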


\section{Lemmas}
\label{se:lemmas}
In Lemma~\ref{lemma3} below we relate the integral in Proposition~\ref{proposition1} to a mean square of a Dirichlet polynomial. To deal with this, we use the following three standard lemmas.
\begin{lemma} \label{lemma1}
Let $A > 0$ be given. We have, uniformly in $|t| \leq (\log X)^{A}$,
$$
\sum_{n \sim X} \frac{\lambda(n)}{n^{1 + it}} \ll (\log X)^{-A}.
$$
\end{lemma}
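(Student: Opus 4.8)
The plan is to deduce the bound from an estimate for the partial sums $A(y) := \sum_{n \le y} \lambda(n)\, n^{-it}$, and then to strip off the weight $n^{-1}$ by partial summation. Using $\sum_{n} \lambda(n)\, n^{-it} n^{-s} = \zeta\bigl(2(s+it)\bigr)/\zeta(s+it)$ for $\Re s > 1$, I would start from the truncated Perron formula: with $s_0 := 1 + 1/\log y$ and a parameter $T$ to be chosen as a fixed power of $\log X$,
$$
A(y) = \frac{1}{2\pi i} \int_{s_0 - iT}^{s_0 + iT} \frac{\zeta\bigl(2(s+it)\bigr)}{\zeta(s+it)}\, \frac{y^s}{s}\, ds + O\!\left( \frac{y (\log y)^2}{T} \right),
$$
the error term being the usual bound for $\sum_{n} |a_n| (y/n)^{s_0} \min\bigl(1, (T|\log(y/n)|)^{-1}\bigr)$ with $|a_n| = 1$ (taking $y$ to be a half-integer to avoid the diagonal term).

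Next I would move the line of integration from $\Re s = s_0$ to $\Re s = 1 - \eta$, where $\eta := c_1/\log\bigl((\log X)^A + T + 2\bigr)$ and $c_1$ is the small constant from the classical zero-free region $\sigma \ge 1 - c_1/\log(|\tau|+2)$ of $\zeta(\sigma + i\tau)$. The key point is that \emph{no poles are crossed}: the pole of $1/s$ lies on $\Re s = 0$, the pole of $\zeta\bigl(2(s+it)\bigr)$ on $\Re s = 1/2$, the pole of $\zeta(s+it)$ at $s + it = 1$ is harmless since $\zeta(s+it)$ sits in the denominator, and $1/\zeta(s+it)$ has no pole in the strip $1-\eta \le \Re s \le s_0$ because every zero $\rho$ of $\zeta$ with $|\Im\rho| \le T + (\log X)^A$ satisfies $\Re\rho < 1-\eta$. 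On the shifted line one has $|\zeta(2(s+it))| = O(1)$ (its real part is $\ge 3/2$ for $X$ large), $1/|\zeta(s+it)| \ll \log\bigl((\log X)^A + T + 2\bigr)$ throughout the zero-free region, and $|s|^{-1} \ll (1+|\Im s|)^{-1}$, so the vertical integral is $\ll y^{1-\eta}\log\bigl((\log X)^A + T + 2\bigr)\log T$; the two horizontal segments at height $\pm T$ are bounded similarly by $\ll y\,\log\bigl((\log X)^A + T + 2\bigr)/T$.

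Finally I would take $T := (\log X)^{A+3}$, so that $\log\bigl((\log X)^A + T + 2\bigr) \ll_A \log\log X$ and hence $\eta \asymp_A 1/\log\log X$; then $y^{1-\eta} = y\exp(-\eta\log y) \ll_{A,C} y(\log X)^{-C}$ for every $C$, which swamps the remaining powers of $\log X$, while the Perron error and the horizontal contributions are $\ll_A y(\log X)^{-A-1}$. This gives $A(y) \ll_A y(\log X)^{-A-1}$ uniformly for $y \in [X, 2X]$ and $|t| \le (\log X)^A$, and partial summation,
$$
\sum_{X < n \le 2X} \frac{\lambda(n)}{n^{1+it}} = \frac{A(2X)}{2X} - \frac{A(X)}{X} + \int_X^{2X} \frac{A(u)}{u^2}\, du,
$$
then yields the claimed bound $\ll_A (\log X)^{-A}$. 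I expect the only genuinely delicate part to be the bookkeeping around the zero-free region — ensuring that the shifted line stays to the right of every relevant zero of $\zeta(s+it)$, which is exactly what forces $T = (\log X)^{O(A)}$ and $\eta \asymp 1/\log\log X$ — after which it remains only to observe that $y^{-\eta}$ beats every power of $\log X$; the rest is routine contour estimation.
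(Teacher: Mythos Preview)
Your argument is correct: the contour shift into the classical zero-free region, with $T$ a fixed power of $\log X$ and $\eta \asymp 1/\log\log X$, does give $A(y) \ll_A y(\log X)^{-A-1}$ uniformly for $y \in [X,2X]$, and partial summation then finishes. The bookkeeping around the poles and zeros is accurate.

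However, the paper takes a much shorter route. Rather than bounding $A(y) = \sum_{n\le y}\lambda(n)n^{-it}$ and then stripping the weight $n^{-1}$, it reverses the order: it starts from the untwisted partial sums $S(u) := \sum_{X\le n\le u}\lambda(n)/n$, which the prime number theorem (as a black box) bounds by $(\log X)^{-2A}$, and then strips off the factor $n^{-it}$ by partial summation. Since $|d(u^{-it})/du| = |t|/u$, this costs only a factor of $|t|\le (\log X)^A$, yielding the claim in three lines. Your approach effectively reproves a twisted prime number theorem via contour shifting; the paper's approach exploits that $|t|$ is so small (only a power of $\log X$) that the twist can simply be removed by integration by parts at the cost of one factor of $|t|$. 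Your method has the advantage of working for considerably larger $|t|$, but that extra range is not needed here.
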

\begin{proof}
By the prime number theorem for any $A > 0$, we have,
$$
\sum_{X \leq n \leq u} \frac{\lambda(n)}{n} \ll (\log X)^{-2A}
$$
for any $u \in [X, 2X]$. 
Therefore, integrating by parts we find
\begin{align*}
\sum_{n \sim X} \frac{\lambda(n)}{n^{1 + it}} & = \int_{X}^{2X} u^{-it} d \sum_{X \leq n \leq u} \frac{\lambda(n)}{n} \\
& \ll \frac{|t|}{X} \int_{X}^{2X} \Big | \sum_{X \leq n \leq u} \frac{\lambda(n)}{n} \Big | du + (\log X)^{-2A}\\
& \ll (\log X)^{A} \cdot (\log X)^{-2A} = (\log X)^{-A}
\end{align*}
which gives the claim. 
\end{proof}

\begin{lemma} \label{lemma2}
Let $A > 0$ be given and $X \geq 1$. Assume that $\exp((\log X)^{\theta}) \leq P \leq Q \leq X$ for some $\theta > 2/3$ and let
$$
\mathcal{P}(1 + it) = \sum_{P \leq p \leq Q} \frac{1}{p^{1 + it}}. 
$$
Then, for any $|t| \leq X$,
$$
|\mathcal{P}(1 + it)| \ll \frac{\log X}{1 + |t|} + (\log X)^{-A}. 
$$
\end{lemma}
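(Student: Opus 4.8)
The plan is to bound $\mathcal{P}(1+it)$ by relating it to $\log \zeta(1+it)$ via the Euler product and then invoking classical zero-free region estimates for the Riemann zeta function. First I would write
$$
\mathcal{P}(1+it) = \sum_{P \leq p \leq Q} \frac{1}{p^{1+it}} = \sum_{p \leq Q} \frac{1}{p^{1+it}} - \sum_{p < P} \frac{1}{p^{1+it}},
$$
so it suffices to bound each partial sum $\sum_{p \leq y} p^{-1-it}$ for $y \in \{P, Q\}$ uniformly. Each such sum is, up to the contribution of prime powers (which is $O(1)$ and in fact $O((\log X)^{-A})$ after we absorb it, or simply handled by noting $\sum_{k \geq 2} \sum_p p^{-k} < \infty$), the logarithm of a truncated Euler product, so I would compare $\sum_{p \leq y} p^{-1-it}$ with $\log \zeta(1+it)$.

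The key step is a contour-integration / Perron-type argument: for $y \geq \exp((\log X)^\theta)$ with $\theta > 2/3$ and $|t| \leq X$, one has
$$
\sum_{p \leq y} \frac{\log p}{p^{1+it}} = -\frac{\zeta'}{\zeta}(1+it) + O\!\left(\text{error}\right),
$$
where the error is controlled using the classical zero-free region $\sigma > 1 - c/(\log(|t|+2))^{2/3}(\log\log(|t|+2))^{1/3}$ (Vinogradov--Korobov), together with the bound $\zeta'/\zeta(1+it) \ll (\log(|t|+2))^{2/3}(\log\log(|t|+2))^{1/3}$ and $1/\zeta(1+it) \ll (\log(|t|+2))^{2/3}$ on that line. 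The truncation level $y \geq \exp((\log X)^\theta)$ is exactly what is needed so that $\exp(-c \log y / (\log X)^{2/3} \cdot(\ldots)) $ type error terms coming from shifting the contour past the zero-free region are negligible (this is where $\theta > 2/3$ is used). Passing from the weighted sum $\sum \log p \cdot p^{-1-it}$ back to $\sum p^{-1-it}$ by partial summation introduces only lower-order terms. Finally, to get the main term $\frac{\log X}{1+|t|}$ I would use that $\log\zeta(1+it) \ll \log\frac{1}{|t|} \ll \log X$ for $|t| \leq 1$ (the pole of $\zeta$ at $s=1$), and $\log \zeta(1+it) \ll \log\log(|t|+2) \ll \log\log X$ for $1 \leq |t| \leq X$, which in both ranges is $\ll \frac{\log X}{1+|t|} + (\log X)^{-A}$ — in fact comfortably so, since the stated bound is quite lossy.

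The main obstacle is making the contour shift rigorous with fully explicit, uniform control of the error term as a function of both $y$ and $t$: one must track how the error depends on the distance from the line $\sigma = 1$ to the zero-free region, and verify that for \emph{all} $|t| \leq X$ (including large $t$, where the Vinogradov--Korobov region is needed rather than the classical $1 - c/\log|t|$ region) the error is $o((\log X)^{-A})$ once $y \geq \exp((\log X)^\theta)$ with $\theta > 2/3$. An alternative, and perhaps cleaner, route that avoids heavy zeta-function input would be to use a Vinogradov-style exponential sum estimate or a direct appeal to the Vinogradov--Korobov prime number theorem in the form $\sum_{p \leq y} \log p = y + O(y \exp(-c(\log y)^{3/5}))$ together with partial summation; but since the stated bound only requires a saving over a fixed power of $\log X$, even the classical PNT with error $O(y\exp(-c\sqrt{\log y}))$ suffices once $y \geq \exp((\log X)^{2/3+\epsilon})$, so I would present that softer version to keep the note self-contained.
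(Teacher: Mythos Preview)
Your contour-shift approach via the Vinogradov--Korobov zero-free region is the right idea and is essentially what the paper does, but two points in your write-up need correction. First, in the paper the term $\frac{\log X}{1+|t|}$ arises as the Perron \emph{truncation} error: one applies Perron directly to $\mathcal{P}(1+it)$ with the kernel $(Q^s-P^s)/s$ and truncation height $T\asymp|t|$, then shifts the remaining integral to $\Re s=-\sigma_0$ with $\sigma_0\asymp(\log X)^{-2/3}(\log\log X)^{-1}$, picking up $\ll(\log X)^{O(1)}P^{-\sigma_0}\ll(\log X)^{-A}$ since $\theta>2/3$. It does \emph{not} come from bounding $\log\zeta(1+it)$, and your assertion that $\log\log X\ll\frac{\log X}{1+|t|}+(\log X)^{-A}$ for all $|t|\le X$ is simply false (take $|t|=X$). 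If you split into $\sum_{p\le Q}-\sum_{p<P}$ as you propose, the correct observation is that each partial sum equals $\log\zeta(1+it)+O((\log X)^{-A})$, so $\log\zeta(1+it)$ \emph{cancels} in the difference rather than surviving as a main term to be estimated.

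Second, and more seriously, the ``softer'' alternative you say you would actually present --- the prime number theorem plus partial summation --- does not work in the range $|t|\le X$. Partial summation of $\sum_{P\le p\le Q}p^{-1-it}$ against $\theta(u)=u+R(u)$ produces an error of size $(1+|t|)\int_P^Q|R(u)|u^{-2}\,du$, because differentiating $u^{-1-it}$ brings down a factor $1+it$. With the classical bound $R(u)\ll u\,e^{-c\sqrt{\log u}}$ this is $\ll(1+|t|)\,e^{-c'(\log X)^{\theta/2}}$, and for $|t|$ as large as $X$ one would need $(\log X)^{\theta/2}\gg\log X$, i.e.\ $\theta>2$; even the Vinogradov--Korobov form $R(u)\ll u\,e^{-c(\log u)^{3/5}}$ only yields $\theta>5/3$. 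The factor $|t|$ is unavoidable in this route, and it is precisely what the contour-shift argument circumvents: it is not a mere convenience but the substance of the proof.
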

\begin{proof}
In case $|t| \leq 10$, the claim follows immediately from the prime number theorem, so we can assume $|t| > 10$. We can also assume that fractional parts of $P$ and $Q$ are $1/2$ each. Perron's formula says that, for any $\kappa > 0$ and $y  > 0$, we have
$$
\frac{1}{2\pi i} \int_{\kappa - iT}^{\kappa + iT} y^s \cdot \frac{ds}{s} = \begin{cases}
1 & \text{ if } y > 1 \\
0 & \text{ if } y < 1 
\end{cases}
\quad + O \Big ( \frac{y^{\kappa}}{\max(1, T |\log y|)} \Big ).
$$
Therefore, letting $\kappa = 1 / \log X$, and $T = (|t|+1)/2 < |t|-1$, we have
\begin{align} \label{contour}
\mathcal{P}(1 + it)   = \frac{1}{2\pi i} \int_{\kappa - i T}^{\kappa + iT} & \log \zeta(s + 1 + it) \cdot \frac{Q^s - P^s}{s} \cdot ds
+ O\left(\frac{\log X}{|t|+1} + \frac{1}{P^{1/2}}\right).  
\end{align}
Using Vinogradov's zero-free region, we see that $\log \zeta(s + 1 + it)$ is well defined in the region
$$
\mathcal{R} : 1 \leq |\Im s + t| \leq 2X \ , \ \Re s \geq -\sigma_0 := - \frac{1}{(\log X)^{2/3} (\log\log X)}.
$$
In addition for $s \in \mathcal{R}$ we have $|\log \zeta(s + 1 + it)| \ll (\log X)^2$. Therefore shifting the 
contour in (\ref{contour}) to the edge of this region, we see that
\begin{align*}
\mathcal{P}(1 + it) & = \frac{1}{2\pi i} \int_{-T}^{T} \log \zeta  (1 - \sigma_0 + iu + it) \cdot \frac{Q^{-\sigma_0 + iu} - P^{-\sigma_0 + iu}}{-\sigma_0 + iu} du + O \Big ( \frac{\log X}{|t|+1} +\frac{1}{P^{1/2}}\Big ) \\
& \ll (\log X)^{-A} + \frac{\log X}{|t|+1}. 
\end{align*}
as claimed. 
\end{proof}

\begin{lemma} 
\label{le:mvt}
One has
$$
\int_{-T}^{T} \Big | \sum_{n \sim X} \frac{a_n}{n^{1 + it}} \Big |^2 \ll
(T + X) \sum_{n \sim X} \frac{|a_n|^2}{n^2}. 
$$
\end{lemma}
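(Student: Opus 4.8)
The inequality is the classical $L^2$ mean value theorem for Dirichlet polynomials, so the plan is to follow the standard route. Setting $b_n = a_n/n$ and noting $a_n n^{-1-it} = b_n n^{-it}$, the claim is equivalent to $\int_{-T}^{T} |\sum_{n \sim X} b_n n^{-it}|^2\,dt \ll (T+X) \sum_{n \sim X} |b_n|^2$. First I would expand the square and integrate term by term; the diagonal terms $m=n$ contribute exactly $2T\sum_{n\sim X}|b_n|^2$, which is already of the required size, while the off-diagonal terms give
$$
\sum_{\substack{m,n\sim X\\ m\ne n}} b_m\overline{b_n}\cdot\frac{2\sin\!\big(T\log(n/m)\big)}{\log(n/m)}.
$$

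The real content — and the step I expect to be the main obstacle — is bounding this off-diagonal sum by $\ll X\sum_{n\sim X}|b_n|^2$. The trivial estimate, using $|\sin|\le 1$, the inequality $|b_m\overline b_n|\le\tfrac12(|b_m|^2+|b_n|^2)$, and $|\log(n/m)|\gg |n-m|/X$ for $m,n\sim X$, only produces $\ll X(\log X)\sum_n|b_n|^2$, losing a logarithm. To recover the sharp bound I would invoke the arithmetic form of Hilbert's inequality (Montgomery--Vaughan), $|\sum_{m\ne n}x_m\overline{x_n}/(\lambda_m-\lambda_n)|\ll\delta^{-1}\sum_n|x_n|^2$ where $\delta$ is the minimal gap of the $\lambda_n$: here $\lambda_n=\log n$ with $n\sim X$ has $\delta\gg 1/X$, so after bounding the kernel by $2/|\log(n/m)|$ this yields the desired $\ll X\sum_n|b_n|^2$.

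Alternatively, and perhaps more in keeping with a self-contained note, one can bypass Hilbert's inequality via Gallagher's lemma, which gives $\int_{-T}^{T}|\sum_n c_n n^{-it}|^2\,dt\ll T^2\int_{\mathbb R}\big|\sum_{e^{u}<n\le e^{u+1/T}}c_n\big|^2\,du$. Taking $c_n=b_n$, observing that for $n\sim X$ a window $(e^u,e^{u+1/T}]$ contains $\ll 1+X/T$ integers, applying Cauchy--Schwarz to the inner sum, and integrating in $u$ (each $n$ lies in the window for a set of $u$ of measure $\asymp 1/T$) produces $\ll T^2\cdot(1+X/T)\cdot\tfrac1T\sum_n|b_n|^2\asymp(T+X)\sum_n|b_n|^2$. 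In either approach the only genuine point is the passage from the easy $X\log X$ to the sharp $X$; the rest is routine bookkeeping.
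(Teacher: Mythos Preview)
Your proposal is correct and in fact supplies more than the paper does: the paper's proof is simply a citation to \cite[Theorem 9.1]{IwKo04}, and your first approach via the Montgomery--Vaughan generalized Hilbert inequality is precisely the argument behind that reference. One small imprecision worth flagging: you cannot literally bound the kernel $2\sin(T\log(n/m))/\log(n/m)$ by $2/|\log(n/m)|$ and \emph{then} apply Hilbert's inequality, since the latter controls the signed kernel $1/(\lambda_m-\lambda_n)$, not its absolute value (the unsigned sum is genuinely larger by a logarithm). The standard fix is to write $\sin x=(e^{ix}-e^{-ix})/(2i)$, absorb the resulting phases $n^{\pm iT}$ into the coefficients, and apply the Hilbert inequality once to each piece; this recovers exactly the bound you state. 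Your alternative route via Gallagher's lemma is also valid and yields the same $(T+X)$ bound.
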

\begin{proof}
See \cite[Theorem 9.1]{IwKo04}.
\end{proof}

\section{Proof of Proposition 1}
We start with the following lemma which is in the spirit of previous work on primes in almost all intervals, see for instance~\cite[Lemma 9.3]{Harman06}.
\begin{lemma} \label{lemma3}
Let $\delta > 0$ be given. Then
\begin{align*}
&\frac{1}{X} \int_{X}^{2X} \Big | \frac{1}{X^{\delta}} \sum_{x \leq n \leq x + X^{\delta}}
\lambda(n) \Big |^2 dx \\
&\ll \int_0^{X^{1 - \delta}} \Big | \sum_{n \sim X} \frac{\lambda(n)}{n^{1 + it}} \Big |^2 dt
+ \max_{T > X^{1 - \delta}} \frac{X^{1 - \delta}}{T} \int_{T}^{2T} \Big | \sum_{n \sim X} \frac{\lambda(n)}{n^{1 + it}} \Big |^2 dt.
\end{align*}
\end{lemma}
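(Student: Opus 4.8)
The plan is to relate the short sum to a mean square of the Dirichlet polynomial $F(1+it):=\sum_{n\sim X}\lambda(n)n^{-1-it}$ via Perron's formula, to split the resulting frequency integral at the transition height $T_0:=X^{1-\delta}=X/X^{\delta}$, and to treat the low frequencies by Plancherel's theorem and the high frequencies by a dyadic decomposition in which cancellation in the $x$‑average is exploited.

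\emph{Step 1 (Perron).} Since $F$ is a Dirichlet polynomial of length $X$, a truncated Perron formula at height $T_1:=X^{2}$ introduces an error that is negligible after squaring and integrating over $x\in[X,2X]$. Hence, for almost all $x$,
$$
\frac{1}{X^{\delta}}\sum_{x\le n\le x+X^{\delta}}\lambda(n)=\frac{1}{2\pi}\int_{-T_1}^{T_1}F(1+it)\,W(t,x)\,dt+O(X^{-1}),\qquad W(t,x):=\frac{1}{X^{\delta}}\int_x^{x+X^{\delta}}u^{it}\,du ,
$$
and one records the bounds $|W(t,x)|\le 1$ and $|W(t,x)|\ll T_0/(1+|t|)$.

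\emph{Step 2 (low frequencies).} For the part $|t|\le T_0$, write $W(t,x)=X^{-\delta}\int_0^{X^{\delta}}(x+r)^{it}\,dr$, apply Minkowski's inequality in the variable $r$, and for each fixed $r$ substitute $y=x+r$ and then $y=e^{s}$. Plancherel's theorem on the $s$-line then gives
$$
\frac1X\int_X^{2X}\Big|\int_{|t|\le T_0}F(1+it)W(t,x)\,dt\Big|^{2}dx\ \ll\ \int_{|t|\le T_0}|F(1+it)|^{2}\,dt\ \ll\ \int_0^{T_0}|F(1+it)|^{2}\,dt ,
$$
which is the first term on the right-hand side. No approximation of $W$ is used here; the point of isolating $|t|\le T_0$ is merely that Plancherel discards the decay of $W$, a loss which is harmful only in the complementary range.

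\emph{Step 3 (high frequencies).} For $T_0<|t|\le T_1$, decompose dyadically into blocks $|t|\in[T,2T]$, open the square, and view $\int_X^{2X}W(t,x)\overline{W(t',x)}\,dx$ as a kernel in $(t,t')$. Integrating by parts in $x$ shows that on such a block this kernel has size $\ll XT_0^{2}/T^{2}$ and decays in $|t-t'|$; a Schur-test bound then controls the contribution of the block to $\frac1X\int_X^{2X}|\cdots|^{2}dx$ by $\ll \frac{T_0^{2}}{T^{2}}\int_T^{2T}|F(1+it)|^{2}dt$. Since $\sum_{T\ge T_0}\frac{T_0^{2}}{T^{2}}\int_T^{2T}|F|^{2}\ll \max_{T\ge T_0}\frac{T_0}{T}\int_T^{2T}|F|^{2}$ (a geometric series), this produces the second term.

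\emph{Main obstacle.} The crux is Step 3. The naive estimate — replacing $W$ by its pointwise size $\ll T_0/T$ and applying Cauchy--Schwarz — loses an entire factor of $T_0=X^{1-\delta}$ and is useless; one must instead use the oscillation of $x\mapsto W(t,x)$, i.e. an essentially converse Gallagher-type inequality of the kind underlying \cite[Lemma~9.3]{Harman06}. A related technical point is that obtaining the dyadic sum cleanly, with no spurious logarithmic factors so that it collapses to the stated maximum, requires the off-diagonal decay of the kernel to be rapid; this is most easily arranged by first replacing the sharp cutoffs $n\sim X$ and $x\in[X,2X]$ by smooth ones, at a cost that is straightforward to control.
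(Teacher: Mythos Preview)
Your overall strategy---Perron, then a bilinear analysis of the kernel $K(t,t')=\int W(t,x)\overline{W(t',x)}\,dx$ with a smooth weight in $x$, splitting at $T_0=X^{1-\delta}$---is the same as the paper's. Your Step~2 (Minkowski in $r$ then Plancherel) is a neat variant for the low frequencies; the paper instead treats all $t$ at once via the bilinear estimate.

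There is, however, a quantitative slip in Step~3. Write $W(t,x)=x^{it}A(t,x)$ with $A(t,x)=\tfrac{1}{h}\int_0^h(1+r/x)^{it}\,dr$. On a block $|t|,|t'|\asymp T>T_0$ one has $|A|\ll T_0/T$ but $|\partial_x A|\asymp T/(T_0 X)$, so after inserting a smooth cutoff $g(x/X)$ each integration by parts in $x$ only gains a factor $T/(T_0|t-t'|)$. Thus $K(t,t')$ has effective width $\asymp T/T_0$ in $|t-t'|$, not $O(1)$, and the Schur bound gives $\tfrac{T_0}{T}\int_T^{2T}|F|^2$ per block rather than the claimed $\tfrac{T_0^2}{T^2}\int_T^{2T}|F|^2$. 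The dyadic sum $\sum_{T\ge T_0}\tfrac{T_0}{T}\int_T^{2T}|F|^2$ then does not collapse to a maximum; you are a factor $\log X$ away from the stated lemma. Smoothing the cutoffs in $x$ or in $n\sim X$ does not help here: the obstruction is the $x$-variation of the \emph{amplitude} $A(t,x)$, not of the cutoff.

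The paper eliminates this obstruction by the Saffari--Vaughan device: averaging $\tfrac{(x+h)^s-x^s}{s}$ over $w\in[h,3h]$ and substituting $w=xu$ factors the kernel as $x^{s}\cdot\tfrac{(1+u)^s-1}{s}$ for a \emph{fixed} $u\asymp h/X$ independent of $x$. Then $\int g(x/X)\,x^{s_1+\bar s_2}\,dx\ll X^3/(1+|t_1-t_2|)^2$ has width $O(1)$, the two size factors $\min(h/X,|t_j|^{-1})$ survive intact, and after $|ab|\le\tfrac12(a^2+b^2)$ one lands on $\tfrac{X^2}{h^2}\int|F(1+it)|^2\min((h/X)^2,t^{-2})\,dt$, which does collapse geometrically to the stated $\max_{T\ge T_0}\tfrac{T_0}{T}\int_T^{2T}|F|^2$.
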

\begin{proof}

Write $h := X^\delta$. By Perron's formula
\begin{align*}
\frac{1}{h} \sum_{x \leq n \leq x + h} \lambda(n) & = \frac{1}{h} \cdot 
\frac{1}{2\pi i} \int_{1 - i \infty}^{1 + i\infty} \Big ( \sum_{n \sim X} 
\frac{\lambda(n)}{n^{s}} \Big ) \cdot  \frac{(x + h)^s - x^s}{s} ds.
\end{align*}
Hence it is enough to bound
\[
\begin{split}
V := \frac{1}{h^2 X} \int_{X}^{2X} \left|\int_{1}^{1 + i\infty}  F(s) \frac{(x+h)^s-x^s}{s}ds\right|^2 dx,
\end{split}
\]
where $F(s) = \sum_{n \sim X} \lambda(n)n^{-s}$. We would like to add a smoothing, take out a factor $x^s$, expand the square, exchange the order of integration and integrate over $x$. However, the term $(x+h)^s$ prevents us from doing this and we overcome this problem in a similar way to \cite[Page 25]{SaVa77}. We write
\[
\begin{split}
&\frac{(x+h)^s - x^s}{s} = \frac{1}{2h} \left(\int_{h}^{3h} \frac{(x+w)^s - x^s}{s} dw - \int_{h}^{3h} \frac{(x+w)^s - (x+h)^s}{s} dw\right) \\
&= \frac{x}{2h} \int_{h/x}^{3h/x} x^s\frac{(1+u)^s - 1}{s} du - \frac{x+h}{2h} \int_{0}^{2h/(x+h)} (x+h)^s \frac{(1+u)^s - 1}{s} du.
\end{split}
\]
where we have substituted $w = x \cdot u$ in the first integral and $w = h+ (x+h)u$ in the second integral. Let us only study the first summand, the second one being handled completely similarly. Thus we assume that
\[
\begin{split}
V &\ll \frac{X}{h^4} \int_X^{2X}\left|\int_{h/x}^{3h/x} \int_{1}^{1+i\infty} F(s) x^s\frac{(1+u)^s - 1}{s} ds du \right|^2 dx \\
&\ll \frac{1}{h^3} \int_{h/(2X)}^{3h/X} \int_{X}^{2X}\left|\int_{1}^{1+i\infty} F(s) x^s\frac{(1+u)^s - 1}{s} ds \right|^2dx du \\
&\ll \frac{1}{h^2X} \int_{X}^{2X}\left|\int_{1}^{1+i\infty} F(s) x^s\frac{(1+u)^s - 1}{s} ds \right|^2dx
\end{split}
\]
for some $u \ll h/X$.

Let us introduce a smooth function $g(x)$ supported on $[1/2, 4]$ and
equal to $1$ on $[1,2]$. We obtain
%
%$[X/2, 4X]$ and $1$ on $[X, 2X]$. We obtain
\begin{align*}
V &\ll \frac{1}{h^2X} \int g\Big ( \frac{x}{X} \Big ) \left|\int_{1}^{1+i\infty} F(s) x^s\frac{(1+u)^s - 1}{s} ds \right|^2dx \\
&\leq \frac{1}{h^2 X} \int_{1}^{1+i\infty} \int_{1}^{1+i\infty} \left|F(s_1) F(s_2) \frac{(1+u)^{s_1}-1}{s_1} \frac{(1+u)^{s_2}-1}{s_2}\right| \left| \int g \Big ( \frac{x}{X} \Big ) x^{s_1+\overline{s_2}} dx \right| |ds_1 ds_2| \\
&\ll  \frac{1}{h^2X}\int_{1}^{1+i\infty} \int_{1}^{1+i\infty} |F(s_1) F(s_2)| \min\left\{\frac{h}{X}, \frac{1}{|t_1|}\right\} \min\left\{\frac{h}{X}, \frac{1}{|t_2|}\right\} \frac{X^3}{|t_1-t_2|^2+1} |ds_1 ds_2| \\
&\ll  \frac{X^2}{h^2} \int_{1}^{1+i\infty} \int_{1}^{1+i\infty} \frac{|F(s_1)|^2 \min\{(h/X)^2, |t_1|^{-2}\}+ |F(s_2)|^2\min\{(h/X)^2, |t_2|^{-2}\}}{|t_1-t_2|^2+1} |ds_1 ds_2| \\
&\ll \int_{1}^{1+iX/h} |F(s)|^2 |ds| + \frac{X^2}{h^2} \int_{1+iX/h}^{1+i\infty} \frac{|F(s)|^2}{|t|^2} |ds|.
\end{align*}
The second summand is
\[
\ll \frac{X^2}{h^2} \int_{1+iX/(2h)}^{1+i\infty} \frac{1}{T^3} \int_{1+iT}^{1+i2T} |F(s)|^2 |ds| dT \ll \frac{X^2}{h^2} \cdot \frac{1}{X/h} \max_{T \geq X/(2h)} \frac{1}{T} \int_{1+iT}^{1+i2T} |F(s)|^2 |ds| 
\]
and the claim follows.
\end{proof}
Proposition \ref{proposition1} will follow from combining Lemma \ref{lemma3} with
the following lemma.
\begin{lemma} \label{lemma4}
Let $\delta > 0$ be given. 
Then, 
\begin{align*}
\int_{0}^{T} \Big | & \sum_{n \sim X} \frac{\lambda(n)}{n^{1 + it}} \Big |^2 dt \ll_{\varepsilon} \frac{1}{(\log X)^{1/3 - \varepsilon}} 
\cdot \Big ( \frac{T}{X} + 1 \Big ) + \frac{T}{X^{1 - \delta/2}}.
%\frac{1}{(\log X)^{A}} \cdot \Big ( \frac{T}{X^{1 - \delta/2}} + 1 \Big ). 
\end{align*}
\end{lemma}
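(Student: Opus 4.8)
The plan is to split the range of integration into two regions according to the size of $|t|$, namely $|t| \leq \exp((\log X)^{4/5})$ (say) and $\exp((\log X)^{4/5}) < |t| \leq T$, since the nature of the available estimates is quite different in the two ranges. In the small-$t$ range we cannot hope for much from a mean-value theorem — the conductor is too small — but here we have pointwise control: by Lemma \ref{lemma1}, $\sum_{n \sim X} \lambda(n) n^{-1-it} \ll (\log X)^{-A}$ uniformly for $|t| \leq (\log X)^A$, and more relevantly for $|t|$ up to $\exp((\log X)^{4/5})$ the same type of bound follows from the prime number theorem with the Vinogradov–Korobov zero-free region (this is in the same spirit as Lemma \ref{lemma2}). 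So on the small-$t$ range the integrand is $\ll (\log X)^{-2/3+\varepsilon}$ pointwise, and integrating over a range of length $\ll \exp((\log X)^{4/5})$ is wasteful; instead I would argue that the contribution of $|t| \leq 1$ is $\ll (\log X)^{-2/3+\varepsilon}$ and for $1 \leq |t| \leq \exp((\log X)^{4/5})$ combine the pointwise bound with Lemma \ref{le:mvt} applied dyadically, gaining the factor $T/X + 1$ in front.

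For the large-$t$ range $\exp((\log X)^{4/5}) < |t| \leq T$ I would use a Ramaré-type identity to factor the Dirichlet polynomial. Writing $\lambda(n) = \lambda(n) \cdot \big(\sum_{p \mid n,\ P \leq p \leq Q} 1\big) / \big(\sum_{p \mid n,\ P \leq p \leq Q} 1\big)$ for suitable $P, Q$ (with the usual care when the denominator vanishes, i.e.\ restricting to $n$ with at least one prime factor in $[P,Q]$, the complementary set being negligible), one obtains a decomposition
\begin{align*}
\sum_{n \sim X} \frac{\lambda(n)}{n^{1+it}} = \Big(\sum_{P \leq p \leq Q} \frac{1}{p^{1+it}}\Big) \Big(\sum_{m} \frac{b_m}{m^{1+it}}\Big) + (\text{error}),
\end{align*}
where the $b_m$ are bounded and supported on $m$ of size roughly $X$, and the weight $\big(\sum_{p \mid n,\ P \leq p \leq Q} 1\big)^{-1}$ is absorbed into the coefficients. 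By Cauchy–Schwarz and Lemma \ref{le:mvt} the mean square of the second factor over $[T, 2T]$ is $\ll (T + X)/X \cdot 1 = T/X + 1$ (after dyadic decomposition of the $t$-range), while by Lemma \ref{lemma2} the prime-sum factor $\mathcal{P}(1+it)$ is $\ll (\log X)/|t| + (\log X)^{-A} \ll X^{-\delta/2}\log X + (\log X)^{-A}$ throughout, provided $P = \exp((\log X)^{4/5})$ so that the hypothesis $\theta > 2/3$ of Lemma \ref{lemma2} holds. Multiplying the pointwise bound on $\mathcal{P}$ by the mean-square bound on the rest, and summing the resulting geometric-type series over dyadic blocks $T_j \in [\exp((\log X)^{4/5}), T]$, yields a contribution $\ll (\log X)^{-A}(T/X+1) + T/X^{1-\delta/2}$, which is absorbed into the stated bound.

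The main obstacle is the Ramaré identity step: one must choose the parameters $P, Q$ and the localization of $m$ so that the identity is genuinely an identity up to a negligible error, and so that both the length of the $m$-sum (which must be $\asymp X$ for Lemma \ref{le:mvt} to give the clean $T/X + 1$) and the lower bound $P \geq \exp((\log X)^{\theta})$ with $\theta > 2/3$ (needed for Lemma \ref{lemma2}) are simultaneously satisfiable. Balancing these forces $\log P$ and $\log Q$ to lie in a fairly narrow window around $(\log X)^{4/5}$ (or any fixed exponent in $(2/3, 1)$), and one has to check that the set of $n \sim X$ with \emph{no} prime factor in $[P,Q]$ contributes $\ll X \exp(-c(\log X)^{1/5})$ or so to the Dirichlet polynomial in mean square — this is a standard sieve/Mertens estimate but is where the $(\log X)^{-1/3+\varepsilon}$ shape of the final bound ultimately comes from, via the weight $1/\#\{p \mid n : P \le p \le Q\}$ and the count of $n$ with exactly one such prime factor.
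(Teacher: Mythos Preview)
Your overall plan matches the paper's, but the choice of $P,Q$ and the accounting of where each term in the bound arises are both off, and this is a genuine gap. You propose $\log P$ and $\log Q$ both of size roughly $(\log X)^{4/5}$. With such a choice the integers $n\sim X$ having no prime factor in $[P,Q]$ have density $\asymp \log P/\log Q\asymp 1$ by the fundamental lemma of the sieve --- not $\exp(-c(\log X)^{1/5})$ as you claim --- so the ``error'' term in the Ramar\'e decomposition swamps everything. The paper instead takes $P=\exp((\log X)^{2/3+\varepsilon})$ (the minimal choice compatible with Lemma~\ref{lemma2}) and $Q=X^{\delta/3}$, a genuine power of $X$; then the sieve term contributes $\ll (T/X+1)\cdot\log P/\log Q\ll (T/X+1)(\log X)^{-1/3+\varepsilon}$, which \emph{is} the first term in the lemma. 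The saving $(\log X)^{-1/3+\varepsilon}$ comes from this ratio, not from the Ramar\'e weight or from counting $n$ with exactly one prime factor in $[P,Q]$.

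The second term $T/X^{1-\delta/2}$ likewise does not come from the bound on $\mathcal{P}(1+it)$ (which is $\ll(\log X)^{-A}$ once $|t|\geq(\log X)^{A+1}$ and is never the bottleneck). It comes from the $m$-sum, which runs over $m\sim X/p$, not $m\sim X$ as you wrote: after a fine dyadic split of $p$ into intervals $[e^{j/H},e^{(j+1)/H}]$ to decouple the constraint $mp\sim X$, Lemma~\ref{le:mvt} applied to the resulting $m$-polynomial of length $\asymp X e^{-j/H}\geq X/Q$ gives $(T+X/Q)\cdot Q/X=TQ/X+1$, and $TQ/X=T/X^{1-\delta/3}$ (times harmless log powers from Cauchy--Schwarz over the dyadic pieces) produces the final term. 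Finally, the small-$t$ cutoff in the paper is only $|t|\leq(\log X)^{10}$, handled entirely by Lemma~\ref{lemma1}; integrating any pointwise bound over a range of length $\exp((\log X)^{4/5})$ would be fatal, and partial summation with Vinogradov--Korobov loses a factor of $|t|$, so does not give the pointwise control on $\sum_{n\sim X}\lambda(n)n^{-1-it}$ that you assert at that height. Once $P,Q$ are chosen as above, the Ramar\'e argument already covers all $|t|\geq(\log X)^{10}$.
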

\begin{proof}
Since the mean value theorem (Lemma~\ref{le:mvt}) gives the bound $O(\frac{T}{X}+1)$, we can assume $T \leq X$. Furthermore, by Lemma~\ref{lemma1}, the part of the integral with $t \leq T_0 := (\log X)^{10}$ contributes $O((\log X)^{-10})$.

Let us now concentrate to the integral over $[T_0, T]$ with $T \leq X$.
Let $P = \exp((\log X)^{2/3 + \varepsilon})$ and $Q = X^{\delta/3}$. We use the decomposition
\begin{equation}
\label{eq:Ramare}
\sum_{n \sim X} \frac{\lambda(n)}{n^{1 + it}} = \sum_{P \leq p \leq Q} \frac{\lambda(p)}{p^{1+it}} \sum_{\substack{m \sim X / p}} \frac{\lambda(m)}{(\# \{p \in [P, Q] \colon p \mid m\}+1)m^{1 + it}} + \sum_{\substack{n \sim X \\ p \mid n \implies p \not \in [P, Q]}}\frac{\lambda(n)}{n^{1 + it}},
\end{equation}
which is a variant of Ramar\'e's identity \cite[Section 17.3]{Opera}. Writing $a_m = \lambda(m)/(\# \{p \in [P, Q] \colon p \mid m\}+1)$, we obtain
\begin{equation}
\begin{split}
\label{eq:intest}
\int_{T_0}^{T} \Big | \sum_{n \sim X} \frac{\lambda(n)}{n^{1 + it}} \Big |^2 dt & \ll \int_{T_0}^{T}
\Big | \sum_{P \leq p \leq Q} \frac{1}{p^{1+it}} \sum_{\substack{m \sim X / p}} \frac{a_m}{m^{1 + it}}\Big|^2 dt + \\
& + \int_{T_0}^{T} \Big | \sum_{\substack{n \sim X \\ p \mid n \implies p \not \in [P, Q]}}\frac{\lambda(n)}{n^{1 + it}}\Big|^2 dt.
\end{split}
\end{equation}
We estimate the second term by completing the integral to $|t| \leq T$ and by applying the mean-value theorem (Lemma \ref{le:mvt}).
This shows that the second term is bounded by
$$
\ll (T + X) \frac{1}{X^2} \sum_{\substack{n \sim X \\ p \mid n \implies p \not \in [P, Q]}} 1  \ll \Big ( \frac{T}{X} + 1 \Big ) \cdot \frac{\log P}{\log Q}
\ll \frac{1}{(\log X)^{1/3 - \varepsilon}} \cdot \Big ( \frac{T}{X} + 1 \Big )
$$
by the fundamental lemma of the sieve. To deal with the first term in~\eqref{eq:intest}, we would like to dispose of the condition $mp \sim x$, so that we can use lemmas in Section~\ref{se:lemmas} to Dirichlet polynomials over $p$ and $m$ separately. To do this, we let $H = (\log X)^5$ and split the summations in the appearing Dirichlet polynomial into short ranges, getting
\begin{equation}
\label{eq:dyadsplit}
\sum_{P \leq p \leq Q} \frac{1}{p^{s}} \sum_{\substack{m \sim X / p}} \frac{a_m}{m^{s}} = \sum_{\lfloor H \log P \rfloor \leq j \leq H \log Q } \ \sum_{\substack{e^{j/H} \leq p < e^{(j+1)/H} \\ P \leq p \leq Q}} \frac{1}{p^s} \ \sum_{\substack{X e^{-(j+1)/H} \leq m \leq 2X e^{-j/H} \\ X \leq m p \leq 2X}} \frac{a_m}{m^s}.
\end{equation}
Now we can remove the condition $X \leq mp \leq 2X$ over-counting at most by the integers $mp$ in the ranges $[X e^{-1/H}, X]$ and $[2X, 2X e^{1/H}]$. Therefore we can, for some bounded $d_m$, rewrite~\eqref{eq:dyadsplit} as
\begin{align*}
\sum_{\lfloor H \log P \rfloor \leq j \leq H \log Q} & Q_{j, H}(s) F_{j, H}(s)+  \sum_{\substack{X e^{-1/H} \leq m \leq X}}\frac{d_m}{m^s} + 
\sum_{\substack{2X \leq m \leq 2X e^{1/H}}} \frac{d_m}{m^s} 
\end{align*}
where
$$
Q_{j,H}(s) := \sum_{e^{j/H} \leq p \leq e^{(j+1)/H}} \frac{1}{p^s} \ \text{ and } \ F_{j,H}(s) := \sum_{X e^{-(j+1)/H} \leq m \leq 2X e^{-j/H}}
\frac{a_m}{m^s}. 
$$
Using this decomposition, applying Cauchy-Schwarz and then taking the maximal term in the resulting sum, we get
\begin{align*}
\int_{T_0}^{T} & \Big | \sum_{P \leq p \leq Q} \frac{1}{p^{1 + it}} \sum_{m \sim X / p} \frac{a_m}{m^{1 + it}} \Big |^2 dt 
 \ll (H \log (Q/P))^2 \int_{T_0}^{T} \Big | Q_{j,H}(1 + it) F_{j, H}(1 + it) \Big |^2 dt + \\ & + \int_{T_0}^{T}
\Big | \sum_{X e^{-1/H} \leq m \leq X} \frac{d_m}{m^{1 + it}} \Big |^2 dt + \int_{T_0}^{T}
\Big | \sum_{2X \leq m \leq 2X e^{1/H}} \frac{d_m}{m^{1 + it}} \Big |^2 dt .
\end{align*}
for some $j \in [\lfloor H \log P \rfloor , H \log Q]$ depending at most on $X$ and $T$. 
We compute the last two integrals by completing the integral to $|t| \leq T$, and applying the mean value theorem (Lemma 
\ref{le:mvt}). This way we see that they
are bounded by
$$
\ll (T + X) \frac{1}{X^2} \cdot (X e^{1/H} - X) \ll \Big ( \frac{T}{X} + 1 \Big ) \frac{1}{H} = \frac{1}{(\log X)^5} \Big ( \frac{T}{X} + 1 \Big ). 
$$
Finally, since $X^{\delta/3} = Q \geq e^{j/H} \geq P/e > \exp((\log X)^{2/3 + \varepsilon/2})$, 
using Lemma~\ref{lemma2} we have, for $T_0 \leq t \leq X$, 
$$
|Q_{j,H}(1 + it)| \ll (\log X)^{-9}.
$$
Therefore, by the mean value theorem (Lemma~\ref{le:mvt}),
\begin{align*}
\int_{T_0}^{T} |Q_{j,H}(1 + it)F_{j,H}(1 + it)|^2 dt & \ll (\log X)^{-18} \int_{T_0}^{T} |F_{j,H}(1 + it)|^2 dt \\
& \ll (\log X)^{-18} \cdot (T + X e^{-j/H}) \frac{1}{X e^{-j/H}} \\ & \ll (\log X)^{-18} \cdot \Big ( \frac{Q T}{X} + 1 \Big ) \ll \frac{T}{X^{1 - \delta/3}} + \frac{1}{(\log X)^{18}}
\end{align*}
since $e^{j/H} \leq Q = X^{\delta/3}$. Combining everything together we get the following bound
$$
\int_{0}^{T} \Big | \sum_{n \sim X} \frac{\lambda(n)}{n^{1 + it}} \Big |^2 dt
\ll \frac{1}{(\log X)^{1/3 - \varepsilon}} \cdot \Big ( \frac{T}{X} + 1 \Big ) 
+ (\log X)^{12} \cdot \Big ( \frac{T}{X^{1 - \delta/3}} + \frac{1}{(\log X)^{18}} \Big )
$$
which implies the required result.
\end{proof}
We are now ready to prove Proposition~\ref{proposition1}.
\begin{proof}[Proof of Proposition~\ref{proposition1}]
Using Lemma \ref{lemma4} we get 
$$
\int_{0}^{X^{1 - \delta}} \Big | \sum_{n \sim X} \frac{\lambda(n)}{n^{1 + it}} \Big |^2 dt
\ll \frac{1}{(\log X)^{1/3-\varepsilon}}
$$ 
and similarly
$$
\max_{T > X^{1 - \delta}} \frac{X^{1 - \delta}}{T} \int_{T}^{2T} \Big | \sum_{n \sim X} \frac{\lambda(n)}{n^{1 + it}}
\Big |^2 dt \ll \frac{1}{(\log X)^{1/3-\varepsilon}}.
$$
We conclude therefore using Lemma \ref{lemma3}, that, 
$$
\frac{1}{X} \int_{X}^{2X} \Big | \frac{1}{X^\delta} \sum_{x \leq n \leq x + X^{\delta}} \lambda(n) \Big |^2 dx \ll \frac{1}{(\log X)^{1/3-\varepsilon}}
$$
as claimed.
\end{proof}
\section{Appendix: Proof of Proposition \ref{proposition2}}
The proof of Proposition \ref{proposition2} is more involved and involves
more tools. We will therefore freely make appeal to \cite{MainPaper} whenever
necessary. First,~\cite[Lemma 14]{MainPaper} (a variant of Lemma~\ref{lemma3} here), implies that in order to establish Proposition~\ref{proposition2}, we need to bound
$$
\int_{(\log X)^{1/15}}^{T} \Big | \sum_{n \sim X} \frac{f(n)}{n^{1 + it}} \Big |^2 dt.
$$
and perform a minor cosmetic operation. %to relate the average over $X \leq n \leq X (\log X)^{-1/5}$ to the
%average over the full interval $X \leq n \leq 2X$. 
The main ingredient in the proof of Proposition~\ref{proposition2} is thus the following lemma.
\begin{lemma}
\label{le:mainest}
We have, 
$$
\int_{(\log X)^{1/15}}^{T} \Big | \sum_{n \sim X} \frac{f(n)}{n^{1 + it}} \Big |^2 dt \ll
\frac{1}{(\log X)^{1/48}} \cdot \Big ( \frac{T}{X} + 1\Big ) + \frac{T X^{o(1)}}{X}.
$$
\end{lemma}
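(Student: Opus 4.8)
The plan is to run the proof of Lemma~\ref{lemma4} essentially verbatim, isolating the two places where the special structure of $\lambda$ was used — the prime number theorem (through Lemma~\ref{lemma1}, and implicitly through $\lambda$ being completely multiplicative with $\lambda(p)=-1$, so that the prime polynomials appearing are genuine truncations of $-\log\zeta$) and the pointwise bound for prime polynomials of Lemma~\ref{lemma2} — and replacing each by an input valid for an arbitrary $f\colon\mathbb N\to[-1,1]$. The first is painless here: the range $|t|\le(\log X)^{1/15}$ has already been removed, by the variant \cite[Lemma 14]{MainPaper} of Lemma~\ref{lemma3} together with the subtraction of the mean value (the ``minor cosmetic operation''), so we may work on $[(\log X)^{1/15},T]$ throughout.

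I would first apply Ramar\'e's identity~\eqref{eq:Ramare} with $P=(\log X)^{B}$ and $Q=\exp((\log X)^{\eta})$ for suitable fixed $B,\eta>0$, writing $\sum_{n\sim X}f(n)n^{-1-it}$ as a main term supported on integers with a prime factor in $[P,Q]$ plus a sieve term supported on integers free of prime factors in $[P,Q]$. The sieve term is handled exactly as in Lemma~\ref{lemma4}, by completing the integral to $[-T,T]$ and invoking the mean value theorem (Lemma~\ref{le:mvt}) together with the fundamental lemma of the sieve, which gives $\ll(\log P/\log Q)(T/X+1)\ll(\log X)^{-\eta+o(1)}(T/X+1)$. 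For the main term I would carry out the same dyadic decomposition~\eqref{eq:dyadsplit} of the $p$- and $m$-ranges with $H=(\log X)^{5}$, discarding the $O(1/H)$ over-counting boundary sums by the mean value theorem, thus reducing matters to bounding $\int_{(\log X)^{1/15}}^{T}|Q_{j,H}(1+it)F_{j,H}(1+it)|^{2}\,dt$ for a single $j$, where now $Q_{j,H}(s)=\sum_{e^{j/H}\le p\le e^{(j+1)/H}}f(p)p^{-s}$ and $F_{j,H}$ is a long Dirichlet polynomial with coefficients bounded by $1$.

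This is where the essential difficulty lies, and where the argument departs from that of Theorem~\ref{thm2}. For $\lambda$ one used Lemma~\ref{lemma2} to bound $|Q_{j,H}(1+it)|$ pointwise by a large negative power of $\log X$, which more than compensated the factor $(H\log Q)^{2}$ lost in Cauchy--Schwarz; for a general $f$ only the elementary bound $|Q_{j,H}(1+it)|\ll 1/H$ is available, which exactly cancels the $H^{2}$ but leaves an unacceptable factor $(\log Q)^{2}$. Instead one must argue through the \emph{measure} of the set of $t$ on which $|Q_{j,H}(1+it)|$ exceeds a threshold $V$: on its complement the mean value theorem gives an acceptable contribution $\ll V^{2}(QT/X+1)$ per dyadic piece, while on the set itself one combines a high-moment estimate for prime polynomials — from the mean value theorem applied to $Q_{j,H}^{k}$ together with the sparsity of integers that are products of $k$ primes from a short interval — with a Hal\'asz-type input. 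The main obstacle is that a single pair $(P,Q)$ cannot make both the sieve term and this large-values term small, since they constrain $\log Q/\log P$ in opposite directions; resolving this forces one to split $[P,Q]$ into many subintervals $[P_i,Q_i]$ on which $\log Q_i/\log P_i$ stays bounded, to raise each corresponding prime factor to its own power before taking moments, and to run the combinatorial argument of~\cite{MainPaper}. Arranging the subintervals, the exponents, and the threshold $V$ so that all the pieces fit, while keeping the accumulated $X^{o(1)}$ losses under control, is precisely the step that cannot be made self-contained, so I would state the required lemmas of~\cite{MainPaper} and only indicate how they combine to yield Lemma~\ref{le:mainest}.
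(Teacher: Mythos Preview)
Your outline tracks the paper's reduction faithfully up to the point where you assert that ``a single pair $(P,Q)$ cannot make both the sieve term and this large-values term small.'' That assertion is the one genuine mismatch: the whole point of the paper's proof of Lemma~\ref{le:mainest} is that a single pair \emph{does} suffice in the $h=X^{\delta}$ regime. The paper takes $H=(\log X)^{1/48}$, $P=\exp((\log X)^{1-1/48})$ and $Q=\exp(\log X/\log\log X)$, so that $\log Q/\log P\asymp(\log X)^{1/48}/\log\log X$ is simultaneously large enough for the sieve error $\log P/\log Q$ to be $\ll(\log X)^{-1/48+o(1)}$ and small enough that the Hal\'asz bound \cite[Lemma~3]{MainPaper} still gives $\sup_{|t|\ge(\log X)^{1/15}}|F_{j,H}(1+it)|\ll(\log X)^{-1/24}$. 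On the large set $\mathcal{T}_L=\{t:|Q_{j,H}(1+it)|>(\log X)^{-100}\}$ the paper passes to a well-spaced subset $\mathcal{T}$, bounds $|\mathcal{T}|\ll\exp((\log X)^{1/48+o(1)})$ by \cite[Lemma~8]{MainPaper}, and then controls $\sum_{t\in\mathcal{T}}|Q_{j,H}(1+it)|^2$ by the large-value lemma for prime polynomials \cite[Lemma~11]{MainPaper}; the outcome $|F_{j,H}|^2\cdot(H(\log P)^2)^{-1}\ll(\log X)^{-2-1/16}$ beats the Cauchy--Schwarz loss $(\log X)^{2+1/24}$ with room to spare.

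It is your parameter choices $P=(\log X)^{B}$ and $H=(\log X)^{5}$, carried over unchanged from the $\lambda$ case, that make the single-interval argument look doomed: with $\log P$ only of size $\log\log X$ the cardinality bound from high moments (equivalently \cite[Lemma~8]{MainPaper}) degenerates to something of size a power of $T$, and the large $H$ inflates the Cauchy--Schwarz loss for no reason. The multi-interval decomposition you then fall back on is precisely the device \cite{MainPaper} needs for arbitrarily slowly growing $h$, where indeed no single $(P,Q)$ works; invoking it here would eventually succeed, but it imports the full combinatorial apparatus of \cite{MainPaper} into a situation the paper handles with one interval and three black-box lemmas. In short: your tension between sieve and large-values is real, but the window for $\log Q/\log P$ is nonempty, and the paper's proof lives in it.
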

\begin{proof}
In view of the trivial bound $O(T/X + 1)$ from the mean value theorem (Lemma 
\ref{le:mvt}) we can assume that $T \leq X$. 

Let 
\[
H = (\log X)^{1/48}, \quad P = \exp((\log X)^{1 - 1/48}), \quad Q = \exp(\log X / \log\log X),
\]
and let
$$
Q_{j,H}(s) := \sum_{e^{j/H} \leq p \leq e^{(j+1)/H}} \frac{f(p)}{p^s} \quad \text{and}\quad F_{j,H}(s) := 
\sum_{X e^{-(j+1)/H} \leq m \leq 2X e^{-j/H}} \frac{f(m)}{m^s}.
$$
Then using~\cite[Lemma 12]{MainPaper} (which is a slightly more involved version of some of the arguments in proof of Lemma~\ref{lemma4}) we find the following bound,
\begin{align*}
& \int_{(\log X)^{1/15}}^{T} \Big | \sum_{n \sim X} \frac{f(n)}{n^{1 + it}} \Big |^2 dt \ll 
\\ & \ll (\log X)^{2 + 1/24} \int_{(\log X)^{1/15}}^{T} |Q_{j,H}(1 + it)F_{j,H}(1 + it)|^2 dt 
+ \frac{1}{(\log X)^{1/48}} \cdot \Big ( \frac{T}{X} + 1 \Big )
\end{align*}
for some $\lfloor H \log P \rfloor \leq j \leq H \log Q$ depending at most on $T$ and $X$.   

Let us define
\[
\begin{split}
\mathcal{T}_S  &= \{t \in [(\log X)^{1/15}, T] : |Q_{j,H}(1 + it)| \leq (\log X)^{-100}\} \\
\text{and} \quad \mathcal{T}_L  &= \{t \in [(\log X)^{1/15}, T] : |Q_{j,H}(1 + it)| > (\log X)^{-100}\}.
\end{split}
\]
On $\mathcal{T}_S$ we have by definition and the mean value theorem (Lemma~\ref{le:mvt})
\begin{align*}
\int_{\mathcal{T}_S} \Big | Q_{j,H}(1 + it) F_{j,H}(1 + it)|^2 dt & \ll (\log X)^{-200} \int_{0}^{T} |F_{j,H}(1 + it)|^2 dt \\
& \ll (\log X)^{-200} \cdot (T + X e^{-j/H}) \cdot \frac{1}{X e^{-j/H}} \\
& \ll (\log X)^{-200} \cdot \Big ( \frac{T X^{o(1)}}{X} + 1 \Big )
\end{align*}
since $e^{j/H} \leq Q = X^{o(1)}$, which is a sufficient saving in the logarithm since we need to beat $(\log X)^{2 + 1/24}$ by at least $(\log X)^{1/48}$. 

Let us now turn to $\mathcal{T}_L$. We can find a well-spaced subset $\mathcal{T} \subseteq \mathcal{T_L}$ such that
\[
\int_{\mathcal{T}_L} \Big | Q_{j,H}(1 + it) F_{j,H}(1 + it)|^2 dt \ll \sum_{t \in \mathcal{T}} \Big | Q_{j,H}(1 + it) F_{j,H}(1 + it)|^2 dt
\]
Using \cite[Lemma 8]{MainPaper}, we see that
\begin{align*}
|\mathcal{T}| & \ll \exp \Big ( 2 \frac{\log (\log X)^{100}}{j/H} \log T + 2 \log (\log X)^{100} + 2 \frac{\log T}{j/H} \log\log T \Big ) \\
& \ll \exp \Big ( \frac{(\log X)^{1 + o(1)}}{\log P} \Big ) \ll \exp((\log X)^{1/48 + o(1)}). 
\end{align*}
In addition, using \cite[Lemma 3]{MainPaper} (a consequence of Hal\'asz's theorem), we find that
$$
\sup_{(\log X)^{1/15} \leq |t| \leq T} |F_{j,H}(1 + it)| \ll (\log X)^{-1/16} \cdot \frac{\log Q}{\log P} \ll (\log X)^{-1/24}.
$$
Therefore using~\cite[Lemma 11]{MainPaper} (a large value result for Dirichlet polynomials over primes) this time, we get
\begin{align*}
& \sum_{t \in \mathcal{T}} |Q_{j,H}(1 + it) F_{j,H}(1 + it)|^2  \ll (\log X)^{-1/12}
\sum_{t \in \mathcal{T}}
|Q_{j,H}(1 + it)|^2 \\
& \ll (\log X)^{-1/12} \cdot \Big (  e^{j/H} + |\mathcal{T}| e^{j/H} \exp( - (\log X)^{1/5}) \Big )
\sum_{e^{j/H} < p < e^{(j+1)/H}} \frac{1}{p^2 \log p} \\
& \ll (\log X)^{-1/12} \cdot \frac{e^{(j+1)/H}-e^{j/H}}{e^{j/H}(\log e^{j/H})^2} \ll \frac{1}{H (\log X)^{1/12} (\log P)^{2}} \ll (\log X)^{-2-1/16}.
\end{align*}
Therefore combining everything together we get
\begin{align*}
\int_{(\log X)^{1/15}}^{T} \Big | \sum_{n \sim X} \frac{f(n)}{n^{1 + it}} \Big |^2 dt & \ll (\log X)^{-100} \Big ( \frac{T X^{o(1)}}{X} + 1 \Big )
+ (\log X)^{-1/48} \Big ( \frac{T}{X} + 1 \Big ),
\end{align*}
and the claim follows.
\end{proof}

We are finally ready to prove the Proposition.
\begin{proof}[Proof of Proposition 1]
Now, by~\cite[Lemma 14]{MainPaper} and Lemma~\ref{le:mainest}, we get, for $X^\delta = h_1 \leq h_2 = X / (\log X)^{1/5}$, 
\[
\frac{1}{X} \int_{X}^{2X} \left|\frac{1}{h_1} \sum_{\substack{x \leq m \leq x+h_1}} f(m) - \frac{1}{h_2} \sum_{\substack{x \leq m \leq x+h_2}} f(m) \right|^2 dx \ll (\log X)^{-1/48}.
\]
The claim follows since by~\cite[Lemma 4]{MainPaper}
\[
\frac{1}{h_2} \sum_{x \leq n \leq x + h_2} f(n) = \frac{1}{x} \sum_{X \leq n \leq 2X} f(n) + O((\log X)^{-1/20}).
\]
\end{proof}

%\bibliographystyle{plain}
%\bibliography{../biblio}

\begin{thebibliography}{1}

\bibitem{Opera}
J.~Friedlander and H.~Iwaniec.
\newblock {\em Opera de cribro}, volume~57 of {\em American Mathematical
  Society Colloquium Publications}.
\newblock American Mathematical Society, Providence, RI, 2010.

\bibitem{Harman06}
G.~Harman.
\newblock {\em Prime-detecting Sieves}, volume~33 of {\em London Mathematical
  Society Monographs (New Series)}.
\newblock Princeton University Press, Princeton, 2007.

\bibitem{IwKo04}
H.~Iwaniec and E.~Kowalski.
\newblock {\em Analytic number theory}, volume~53 of {\em American Mathematical
  Society Colloquium Publications}.
\newblock American Mathematical Society, Providence, Rhode Island, 2004.

\bibitem{MainPaper}
K.~Matom\"aki and M.~Radziwi{\l\l}.
\newblock Multiplicative functions in short intervals.
\newblock {\em Preprint}, available at \verb+http://arxiv.org+ as
  arXiv:1501.04585 [math.NT].

\bibitem{SaVa77}
B.~Saffari and R.~C. Vaughan.
\newblock On the fractional parts of $x/n$ and related sequences {II}.
\newblock {\em Ann. Inst. Fourier}, 27:1--30, 1977.

\end{thebibliography}
\end{document}